%% file: nc-hull.tex
\documentclass{amsart}

\usepackage{amsmath,amsthm,amssymb}
\usepackage[hidelinks]{hyperref}
\usepackage{tikz,graphicx}
\usepackage{standalone}

\graphicspath{
  {./}
  {figures/}
}

\usetikzlibrary{calc}
\tikzstyle{dot}=[shape=circle,draw,color=black,fill=black,inner sep=1.5pt]
\input{tikz-styles.tex}

\theoremstyle{plain}
\newtheorem{thm}{Theorem}[section]
\newtheorem{lem}[thm]{Lemma}

\newtheorem{prop}[thm]{Proposition}
\newtheorem*{conj}{Conjecture}

\newtheorem{mainthm}{Theorem}

\theoremstyle{definition}
\newtheorem{defn}[thm]{Definition}

\newcommand{\NC}{\textsc{NC}}
\newcommand{\CC}{\mathcal{C}}
\newcommand{\HC}{\mathcal{H}}

\newcommand{\conv}{\textsc{Conv}}
\newcommand{\bool}{\textsc{Bool}}
\newcommand{\cat}{{\operatorfont{CAT}}}
\newcommand{\conf}{{\operatorfont{Conf}}}

\newcommand{\upper}{{\uparrow}}
\newcommand{\low}{{\downarrow}}
\newcommand{\rk}{{\operatorfont rk}}
\newcommand{\shape}{\textsc{Shape}}
\newcommand{\partition}{\textsc{Part}}

\title{Noncrossing partitions from hull configurations}
\author{Michael Dougherty and Gina Root}
\date{\today}

\begin{document}

\begin{abstract}
    Each finite configuration of points in the plane determines a 
    corresponding lattice of noncrossing partitions. When these 
    points form the vertex set of a convex polygon, the associated
    lattice is the classical noncrossing partition lattice (introduced
    by Kreweras in 1972), which
    makes many appearances in combinatorics and geometric group theory.
    If, on the other hand, all points of the configuration lie on
    a common line segment, the result is a Boolean lattice. In this
    article, we examine the more general class of 
    \emph{hull configurations}, which we define
    to be those which lie either on a line segment or on the
    boundary of a convex polygon.
    We prove that 
    the corresponding lattices of noncrossing partitions are unions
    of maximal Boolean subposets and, under certain circumstances,
    have symmetric chain decompositions.
\end{abstract}

\maketitle

\section*{Introduction}

If $P$ is the vertex set of a convex $n$-gon in the plane, then a
set partition of $P$ is \emph{crossing} if there are two parts 
of the partition with overlapping convex hulls, or \emph{noncrossing}
otherwise. The \emph{classical lattice of noncrossing partitions} $\NC(n)$ 
is the set of all noncrossing partitions of $P$, partially ordered by 
refinement. This poset was introduced by Kreweras in 1972 \cite{kreweras72} and 
has since become an important object in areas such as algebraic 
combinatorics, geometric group theory, and free probability. See \cite{baumeister19}
and \cite{mccammond06} for surveys.

By choosing a different starting configuration $P$ (which is not necessarily
the vertex set of a convex polygon) and considering the
corresponding poset of noncrossing partitions $\NC(P)$, one obtains 
a natural generalization of $\NC(n)$. As in the classical case, $\NC(P)$
is a lattice for every choice of $P$ \cite{cdhm24}, but other properties vary. 
For example, recent work of the authors with Fang, Jiang, Lin, Lindenmuth and Pokras
\cite{dfjllpr25} shows that when the configuration $P$ lies on either a semicircle 
or a pair of rays with a common origin, then the resulting lattice $\NC(P)$ is 
graded and rank-symmetric; in particular, it has a symmetric chain decomposition.

In this article, we examine a larger class of configurations which includes the 
cases studied in \cite{dfjllpr25}. In brief, we define a \emph{hull configuration}
to be one which lies either on a line segment or on the boundary of a convex $n$-gon.
Intuitively, the noncrossing partition lattices for these configurations interpolate 
between the Boolean lattice (i.e. the lattice of noncrossing partitions for a configuration
which lies on a line segment) and the classical lattice of noncrossing partitions. 
We also define a poset $\HC(n)$ of all hull configurations with $n$ points, 
up to an equivalence relation which allows for deformations of the configuration
without disturbing collinearities. Minimal elements of $\HC(n)$ are those which
lie on a line segment, and maximal elements are those which lie on the vertex set of
a convex $n$-gon. Each maximal chain in $\HC(n)$ consists of $n$ equivalence classes
of configurations, which can be viewed as beginning with the vertex set of a polygon,
then introducing collinearities one at a time until the resulting configuration
lies on a line segment.

Our first main theorem extends the aforementioned result on symmetric chain 
decompositions to hull configurations $P$ which have a ``blank side,'' i.e. an edge of the 
convex hull which contains only two points of $P$.
Boolean lattices were shown to have symmetric chain decompositions
in a 1951 article by de Bruijn, van Ebbenhorst Tengbergen and
Kruyswijk \cite{de-bruijn}, and the same was shown for the classical noncrossing
partition lattice $\NC(n)$ by Simion and Ullman in 1999 \cite{simion-ullman91}.
Rephrased using our terminology, these articles showed that $\NC(P)$ admits
a symmetric chain decomposition when $P$ is either a minimal element of
$\HC(n)$ (in which case $\NC(P)$ is isomorphic to $\bool(n-1)$) or
a maximal element of of $\HC(n)$ (in which case $\NC(P)$ is isomorphic to 
$\NC(n)$). In \cite{dfjllpr25}, this result was extended to hull configurations
referred to as cones and semicircles; the present article provides an even 
greater generalization.

\begin{mainthm}[Theorem~\ref{thm:scd}]
    \label{mainthm:scd}
    If $P$ is a hull configuration with at least one blank side, 
    then $\NC(P)$ has a symmetric chain decomposition.
\end{mainthm}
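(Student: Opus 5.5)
We prove Theorem~\ref{mainthm:scd} by induction on the number of points, modelled on the Simion--Ullman argument for $\NC(n)$. Place the blank side so that its two endpoints $p$ and $q$ are adjacent in the cyclic order of $P$ around the boundary of its convex hull (or, in the degenerate line-segment case, so that $p$ is an extreme point). Every noncrossing partition $\pi\in\NC(P)$ falls into one of two classes: either $p$ is a singleton of $\pi$, or $p$ lies in a block with other points. Deleting $p$ gives a configuration $P\setminus\{p\}$, and we want it to again be a hull configuration with a blank side. The side joining $q$ to the neighbour of $p$ on the other side should serve: removing a vertex of the hull whose incident edge is blank only merges boundary segments, and the points lying between them stay in convex position or on segments. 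The first class is then isomorphic to $\NC(P\setminus\{p\})$, shifted up by one rank.

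The second class is handled in the spirit of Simion--Ullman. For $\pi$ with $p$ in a nonsingleton block, let $r$ be the next point of that block in the cyclic order starting from $p$ and moving away from $q$. Because the side $pq$ is blank, the noncrossing condition near $p$ behaves as it does for a convex polygon. So splitting off $p$, or merging it with $r$, should be a well-defined, order-compatible operation, because of the blank side. This lets us pair chains of the first class with elements of the second class. Concretely, we take a symmetric chain decomposition of $\NC(P\setminus\{p\})$, which exists by induction. Each chain $x_1<\dots<x_k$ lifts to a chain $x_1\cup\{p\}<\dots<x_k\cup\{p\}$ with $p$ a singleton. We extend it at the top by joining $p$ to the block determined by the top element, and remove the corresponding bottom piece. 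This is the standard ``move the top element over'' trick, after which the resulting chains are again symmetric about the middle rank of $\NC(P)$.

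Before this we must check that $\NC(P)$ is graded and rank-symmetric with the expected rank function, presumably $n$ minus the number of blocks. Only then does ``symmetric'' make sense and the rank bookkeeping go through. We would establish this first, using the hull structure: along a segment side, only consecutive points can lie in the same block without enclosing an intermediate point.

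The main obstacle is that the second class is not simply a copy of a smaller noncrossing lattice. When $p$ joins a block, points lying on a common side with $r$ may become forced or forbidden in ways that differ from the convex case. We would therefore first try to decompose the second class further by the partner $r$. For a fixed $r$, the partitions should factor as a product $\NC(P_1)\times\NC(P_2)$ of the two subconfigurations on either side of the chord $pr$. Each factor is a hull configuration: $P_2$, for instance, is cut off by the chord $pr$, and that chord becomes a blank side for it. Products of posets with symmetric chain decompositions have symmetric chain decompositions (de Bruijn et al.). We would then assemble the pieces, checking that their rank offsets make the combined chains symmetric in $\NC(P)$. The extreme case where all points lie on a line segment, giving $\bool(n-1)$, serves as a sanity check of the bookkeeping.
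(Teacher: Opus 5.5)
Your outline matches the paper's proof. With $p=z_{1,0}$ and $q=z_{2,0}$ spanning the blank side, your partner $r$ (the first block-mate of $p$ moving away from $q$) is exactly the paper's $z_{i,j}$. The paper likewise writes $\NC(P)$ as a disjoint union of $X\cong\NC(P\setminus\{p\})\times\bool(1)$ and intervals $[\alpha_{ij},\beta_{ij}]\cong\NC(A_{ij})\times\NC(B_{ij})$, then uses induction and the fact that products preserve symmetric chain decompositions.

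The gap is in the rank bookkeeping you defer, which is what makes each piece centered; as stated, two of your ingredients are wrong. First, the singleton class is not shifted. If $\{p\}$ is a block of $\pi$, then $\pi$ has the same rank in $\NC(P)$ as its restriction has in $\NC(P\setminus\{p\})$. So this class occupies ranks $0,\dots,n-2$ and is not centered about $(n-1)/2$. The copy on ranks $1,\dots,n-1$ is the class in which $p$ shares a block with $z=z_{k,c_k}$, its neighbour on the side away from $q$. Only the union of these two classes is centered. So your de Bruijn step must merge $p$ into the block of $z$ specifically, not into ``the block determined by the top element.'' That merge is always noncrossing because the segments $pz$ and $pq$ contain no other points of $P$; this is where blankness is used. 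Merging into the block of $q$ can fail, e.g.\ when that block contains $z_{k,0}$ but not $z$. Second, the value $r=z$ is then used up by $X$ and must be excluded from your product decomposition. Otherwise it would contribute the interval from the atom $\{p,z\}$ to $\hat 1$, on ranks $1,\dots,n-1$, which is uncentered and double-counted. For $r\neq z$ the piece is $[\alpha_r,\beta_r]$, with $\alpha_r$ of rank $1$ and $\beta_r$ of rank $n-2$ (since $z\in B_r$), hence centered.

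The factors also need to be pinned down. The far factor is $\NC(B_r)$, where $B_r$ consists of the points strictly between $r$ and $p$. It contains neither $p$ nor $r$, and its blank side is the chord from $z$ to the successor of $r$, not $pr$. The near factor is really the subposet of $\NC(A_r\cup\{p\})$ in which $p$ and $r$ share a block. Identifying it with $\NC(A_r)$ by contracting $p$ into $r$ is a separate lemma; the paper imports it from \cite{dfjllpr25} in the proof of Lemma~\ref{lem:interval-product}. Finally, you do not need rank-symmetry in advance, since it follows from the decomposition. What you do need is that $\NC(P)$ is graded with rank $n$ minus the number of blocks, so that saturated chains in each piece remain saturated in $\NC(P)$. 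That is a nontrivial theorem of \cite{dfjllpr25} for hull configurations, and your one-line remark about collinear sides does not prove it.
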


Based on several examples of hull configurations with no blank sides,
it seems likely that Theorem~\ref{mainthm:scd} can be strengthened, as
we record in the following conjecture.

\begin{conj}
    If $P$ is a hull configuration with no blank sides, then
    $\NC(P)$ is not rank-symmetric (and therefore does not have
    a symmetric chain decomposition).
\end{conj}

For example, suppose that $P$ is a configuration of six points such that
three points lie on the vertices of a triangle and the other three points
lie on the midpoints of the triangle's edges. Then $\NC(P)$ is a
graded lattice with 1 element of rank 0, 12 elements of rank 1, 
34 elements of rank 2, 35 elements of rank 3, 12 elements of rank 4, and
1 element of rank 5. Thus $\NC(P)$ is not rank-symmetric.

Our second main theorem concerns the maximal Boolean subposets of $\NC(P)$.
In the classical case, it was previously shown by T. Brady and McCammond
that $\NC(n)$ is a union of maximal Boolean subposets \cite{bm10}. 
In particular, it was shown by Haettel, Kielak and Schwer \cite{hks16} 
and McCammond \cite{hypertrees} that the maximal Boolean subposets of $\NC(n)$ are 
in one-to-one correspondence with noncrossing trees with $n$ vertices. 
More information on this correspondence and its relationship to the theory of
buildings can be found in the work of Heller and Schwer \cite{heller-schwer},
and in Heller's doctoral dissertation \cite{heller-thesis}.

In our more general setting, we 
say that a tree $\tau$ with vertex set $P$ has \emph{convex geodesics} if, for 
each side of $\conv(P)$, the unique geodesic in $\tau$ between the two endpoints
of that side passes through every other point of $P$ which lies in the interior
of that side (Definition~\ref{def:convex-geodesics}).

\begin{mainthm}[Theorem~\ref{thm:boolean-bijection} and Theorem~\ref{thm:boolean-union}]
    \label{mainthm:boolean}
    If $P$ is a hull configuration, then $\NC(P)$ is a union of maximal 
    Boolean subposets, which are in one-to-one correspondence with the
    noncrossing trees on $P$ with convex geodesics.
\end{mainthm}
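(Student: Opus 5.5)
The approach is to follow the Brady--McCammond / Haettel--Kielak--Schwer / McCammond strategy for $\NC(n)$, adapted to the collinearities of a hull configuration. Throughout, $\NC(P)$ is graded with rank function $\rk(\pi) = |P| - (\text{number of blocks of }\pi)$. Hence a maximal Boolean subposet is isomorphic to $\bool(n-1)$, with its atoms being $n-1$ two-point partitions.

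\textbf{Step 1: from trees to Boolean subposets.} To a noncrossing tree $\tau$ on $P$ with convex geodesics, associate the family $B(\tau)$ of partitions obtained by deleting a subset of the edges of $\tau$ and taking connected components. Two things must be checked.
\begin{enumerate}
\item Each such partition is noncrossing. Components of a noncrossing straight-line forest have pairwise disjoint convex hulls, except when two components share a side of $\conv(P)$ with interleaved points. The convex-geodesic hypothesis is designed to exclude exactly this case: the points on the interior of a side are strung along the geodesic between its endpoints, so any component meeting that side does so in an interval of the side's points.
\item The map $S \mapsto$ (components of $\tau$ minus $S$) is an order-embedding of $\bool(n-1)$. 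This holds because distinct edge subsets of a tree give distinct partitions, and refinement corresponds to inclusion of kept edges.
\end{enumerate}

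\textbf{Step 2: every maximal Boolean subposet arises this way, bijectively.} Let $B \subseteq \NC(P)$ be a maximal Boolean subposet. Since $B$ is Boolean and graded compatibly, it contains $\hat 0$ and $\hat 1$ and has $n-1$ atoms, each an edge $\{p,q\}$. The join of all atoms is $\hat 1$, and the poset is Boolean of the correct rank, so the atom edges must form a spanning tree $\tau$.

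One must also show that $\tau$ is noncrossing and has convex geodesics. The argument goes through the partitions in $B$:
\begin{enumerate}
\item If two edges of $\tau$ cross, their join in $B$ has rank 2, yet the only noncrossing partition containing both edges has a block of size 4. This contradicts the Boolean structure of $B$.
\item If a side geodesic skips an interior point $r$ of that side, take the element of $B$ obtained by deleting the edges that separate $r$ from the geodesic. Its component containing the geodesic endpoints has a convex hull containing the side, and hence containing $r$. This forces a crossing.
\end{enumerate}
Injectivity is clear, since $B(\tau)$ recovers $\tau$ as its set of atoms. Maximality of $B(\tau)$ also follows: any Boolean subposet containing it has the same rank, hence the same size.

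\textbf{Step 3: the union statement.} For every $\pi \in \NC(P)$, I need a noncrossing tree with convex geodesics that contains a spanning forest of $\pi$ (one spanning tree per block) and induces $\pi$ on deletion of the remaining edges. The construction runs in three stages.
\begin{enumerate}
\item Inside each block, choose a noncrossing spanning tree. The natural choice is the boundary path of the block's convex hull, ordered along $\partial\conv(P)$, which automatically has convex geodesics on any side segment it contains.
\item Connect the blocks into a single tree, inductively using a block that is ``outermost'' (a consecutive interval along the boundary). Join it to its neighbor by a boundary edge between consecutive points of $P$, so that points on a common side stay linked in order.
\item Handle the case where $P$ lies on a segment separately. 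There everything is a path, and the conclusion is the Boolean case.
\end{enumerate}

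\textbf{Main obstacle.} I expect Step 3 to be hardest, and secondarily the convex-geodesic part of Step 2. The difficulty in Step 3 is guaranteeing that the connecting edges are noncrossing and simultaneously make every side geodesic pass through all interior points of that side, when blocks interleave across different sides. My first attempt would be induction on $|P|$: remove an endpoint of an outermost block lying at a corner or on a side, apply the inductive hypothesis to the smaller hull configuration, and reattach the removed point via a boundary edge to its neighbor along the side. Removing a point keeps the configuration a hull configuration, though it may merge sides or collapse to a segment, and these cases will need checking.
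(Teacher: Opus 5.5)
Your Steps 1 and 2 follow the paper's proof of the bijection (Theorem~\ref{thm:boolean-bijection}, via Lemmas~\ref{lem:convex-geodesics-implies-noncrossing} and~\ref{lem:rank-convexity}), phrased with side geodesics instead of subtree hulls. The one thing to add there is the planarity fact that a side geodesic visits that side's points in their linear order; this is what makes each component meet each side in an interval.

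The genuine gap is Step 3. It carries all the content of the union claim and is only sketched, and the construction as you describe it cannot work in general. Every edge you use to join blocks is a boundary edge between consecutive points of $P$. For $\pi=\hat 0$ all blocks are singletons, so your tree is the boundary cycle of $\conv(P)$ minus one edge $e$. If $e$ lies on a side with an interior point, the geodesic between that side's endpoints is the other arc of the cycle, and it misses every interior point of that side. Hence when $P$ has no blank side (e.g. a triangle with its three edge midpoints), your procedure never yields a tree with convex geodesics, not even for $\hat 0$. The in-block boundary path fails for the same reason when the block is all of $P$. Trees with convex geodesics must in general use chords, and nothing in your construction produces them.

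The inductive fallback also breaks. If the deleted point $p$ is interior to a side and you reattach it by one edge, it becomes a leaf. A leaf never lies on the geodesic between two other vertices, yet the geodesic between the endpoints of $p$'s side must pass through $p$. If $p$ is a corner, its two neighbours become corners of $P-\{p\}$. The neighbour not joined to $p$ may be a leaf of the inductive tree without being a corner of $P$, which again violates convex geodesics. The inductive hypothesis would need to control leaves, and you have not said how.

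The paper avoids a direct construction altogether. It inducts down $\HC(n)$ along elementary collapses $P=m(Q)$, starting from the convex-position case, where $\NC(n)$ is a union of maximal Boolean subposets by \cite{hks16,hypertrees}. Each $\pi\in\NC(P)$ is $m(\rho)$ with $\rho\in\bool(\tau)$ for some tree $\tau$ on $Q$ with convex geodesics. If $m(\tau)$ fails to have convex geodesics, the collapsed vertex $z_{i,0}$ lies off the geodesic from $z_-$ to $z_+$. Edge slides (Lemma~\ref{lem:slide}) then move it onto that geodesic while keeping $\rho$ in the Boolean subposet. That is what your Step 3 lacks: a supply of chord-containing trees compatible with $\pi$, inherited from the classical case, plus a local move that repairs convex geodesics without losing $\pi$.
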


In particular, this result has an interesting connection with the intrinsic
geometry of the $n$-strand braid group. If we let $\overline{\NC(n)}$ denote
the classical lattice of noncrossing partitions $\NC(n)$ with its maximum and 
minimum elements removed, then the order complex $|\overline{\NC(n)}|_{\Delta}$
(also known as the ``diagonal link'' of $\NC(n)$) is an $(n-3)$-dimensional 
ordered simplicial complex whose $k$-dimensional simplices correspond to chains 
in $\overline{\NC(n)}$ with $k+1$ elements. It was shown in \cite{bm10} that, 
with the appropriate metric, $|\overline{\NC(n)}|_{\Delta}$ embeds in a 
spherical building in such a way that the image is a union of apartments.
Moreover, Brady and McCammond conjectured that $|\overline{\NC(n)}|_{\Delta}$
is a $\cat(1)$ metric space, which would imply that the $n$-strand braid group
is a $\cat(0)$ group. This has been proven when $n \leq 7$ 
\cite{bm10,hks16,jeong23}, but remains open in general.

In this context, Theorem~\ref{mainthm:boolean} tells us that when $P$ is a
hull configuration, the diagonal link of $\NC(P)$ is a subcomplex of
the diagonal link for $\NC(n)$ which can also be expressed as a union of
apartments. It would be 
interesting to know whether the smaller diagonal links found in $\NC(P)$ could 
be used to better understand the diagonal link for $\NC(n)$, and thus the 
intrinsic geometry of the braid group. 

The article is organized into three sections. The first introduces
our definitions for hull configurations and their associated partial order.
The second section reviews symmetric chain decompositions and proves 
Theorem~\ref{mainthm:scd}; the third section includes the proof of
Theorem~\ref{mainthm:boolean}.

\section{Convexity classes and hull configurations}
\label{sec:convexity-classes}

In this section we introduce some basic terminology and properties
for configurations. To start, each configuration of $n$ points
in $\mathbb{C}$ can be viewed as a single point in the 
\emph{ordered configuration space} $\conf_n(\mathbb{C})$,
the topological subspace of $\mathbb{C}^n$ consisting of all $n$-tuples 
with distinct entries. Since we will primarily be interested in
appearances of collinearities (rather than distances between points,
for example), we introduce a useful equivalence relation on
$\conf_n(\mathbb{C})$.

\begin{defn}
    \label{def:convexity-class}
    For each configuration $P$ in $\conf_n(\mathbb{C})$, 
    let $\mathcal{L}(P)$ be the arrangement of
    lines in $\mathbb{C}$ which pass through pairs of points in $P$
    and observe that $1\leq |\mathcal{L}(P)| \leq \binom{n}{2}$.
    If a line in $\mathcal{L}(P)$ contains more than two points
    in $P$, then we say that the two extreme points on the line are
    \emph{external collinearities} and the other points on the line
    are \emph{internal collinearities}.
    For each $z \in P$, define the \emph{multiplicity} 
    $\ell(z)$ to be the number of lines
    for which $z$ is an internal collinearity. 
    Define the function $L \colon \conf_n(\mathbb{C}) \to \mathbb{Z}^n$ by 
    $L(z_1,\ldots,z_n) = (\ell(z_1),\ldots,\ell(z_n))$ and for each $P \in \conf_n(\mathbb{C})$,
    let $X_P$ denote the path component of $L^{-1}(L(P)) \subset \conf_n(\mathbb{C})$ containing $P$.
    Finally, we define an equivalence relation on $\conf_n(\mathbb{C})$
    by declaring $P\sim Q$ if $X_P = X_Q$.
    We refer to the equivalence classes for this relation as
    \emph{convexity classes}. For the remainder of the article, we will 
    not distinguish between a configuration and the convexity class 
    which contains it.
\end{defn}

\begin{figure}
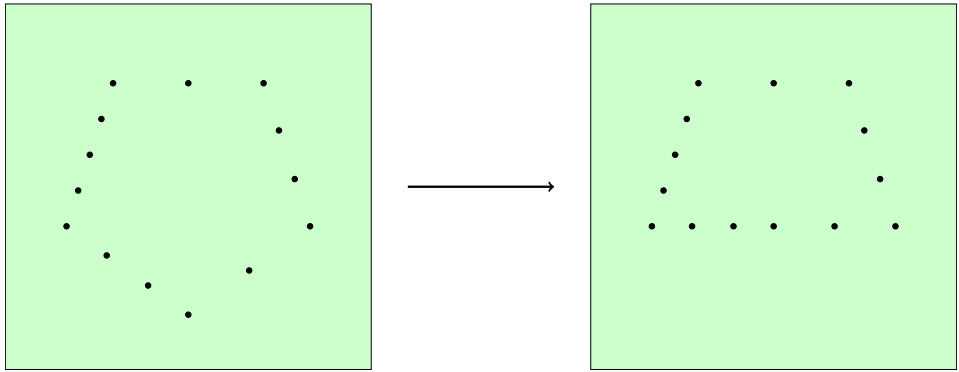

    \centering
    \includestandalone[width=\textwidth]{fig-elem-collapse}
    \caption{An elementary collapse $m$ from a hull
    configuration $Q$ (left) to another hull configuration $m(Q) = P$ (right)}
    \label{fig:elem-collapse}
\end{figure}

The usual total order on the integers leads to a partial order on
$\mathbb{Z}^n$ in the following manner: 
$(a_1,\ldots,a_n) \leq (b_1,\ldots,b_n)$ if $a_i \leq b_i$
for all $i$. Note that this is a graded lattice with rank function
$\rk \colon \mathbb{Z}^n \to \mathbb{Z}$ given by 
$\rk(a_1,\ldots,a_n) = a_1+\cdots+a_n$.
This partial order can be pulled back through the map $L$
to define a partial order on the set of convexity classes.

\begin{defn}
    \label{def:poset-of-conv-classes}
    Let $\CC(n)$ denote the set of all convexity classes 
    with $n$ points. Use the typical partial order on $\mathbb{Z}^n$ 
    to define a partial order on $\CC(n)$ as follows:
    declare $P\leq Q$ in $\CC(n)$ if $L(Q) \leq L(P)$ in $\mathbb{Z}^n$ and the 
    path component $X_P$ is contained within the closure of the 
    path component $X_Q$. In other words, $P \leq Q$ if
    $Q$ can be continuously deformed into $P$ by (potentially) 
    introducing additional collinearities without removing any. 
    When $P < Q$ is a covering relation (i.e. there is no $R \in \CC(n)$
    with $P < R < Q$), we say that the deformation transforming $Q$ into $P$ is an \emph{elementary collapse}; see Figure~\ref{fig:elem-collapse}.
\end{defn}

It is straightforward to see from the preceding definition that
$\CC(n)$ is a graded poset of height $n-2$ with rank function 
$\rk \colon \CC(n) \to \mathbb{Z}$ given by defining
\[\rk(z_1,\ldots,z_n) = n-(\ell(z_1) + \cdots + \ell(z_n)).\]
Note that the minimal elements of $\CC(n)$ have rank $2$ and
the maximal elements have rank $n$. In particular, every
element of $\CC(n)$ can be obtained from a maximal element
via a finite sequence of elementary collapses.

Next, we introduce a special type of
convexity class which forms the main object of study in this article.

\begin{defn} 
    \label{def:hull-configuration}
    We say that $P \in \CC(n)$ is a \emph{hull configuration}
    if the convex hull $\conv(P)$ is either one-dimensional or
    contains no elements of $P$ in its interior. 
    Observe that for each point $z$ in a hull configuration $P$, 
    the multiplicity $\ell(z)$ is either $0$ or $1$.
    Let $\HC(n)$ denote the subposet of all hull configurations
    in $\CC(n)$, and note that $\HC(n)$ is a graded poset with the 
    same rank function 
    as $\CC(n)$. In fact, the rank function has an alternative definition in this 
    setting: for all $P \in \HC(n)$, $\rk(P) = 2$ if the convex hull $\conv(P)$ 
    is one-dimensional, and otherwise $\rk(P) = k$, where $k$
    is the number of vertices in $\conv(P)$.
\end{defn}

To record the number of internal collinearities
in a hull configuration, we introduce the following notation.

\begin{defn}
    \label{def:shape-of-conf}
    Let $P$ be a hull configuration of rank $k\geq 3$ in $\HC(n)$. 
    Number the sides of $\conv(P)$ from $1$ to $k$ counterclockwise, 
    and let $c_i$ denote the number of internal collinearities 
    on the side numbered $i$. We define the \emph{shape} of $P$
    to be $\shape(P) = [c_1; \ldots; c_k]$, the
    equivalence class of the $k$-tuple $(c_1,\ldots,c_k)$ up to 
    cyclic shifts. In other words, $\shape(P)$ is a cyclic composition
    of $n-k$ into $k$ parts (some of which may be zero). When $c_i=0$, we say that side $i$ is a \emph{blank side} of the convex hull of $P$. Finally, we label the
    $c_i+2$ points of $P$ on side $i$ as $z_{i,0}, z_{i,1}, \ldots, z_{i,c_i}, z_{i+1,0}$,
    where this linear order on side $i$ comes from the counterclockwise cyclic order 
    on $P$, and the first subscript is evaluated mod $k$. See
    Figure~\ref{fig:conf-shape} for an illustration.
\end{defn}

\begin{figure}
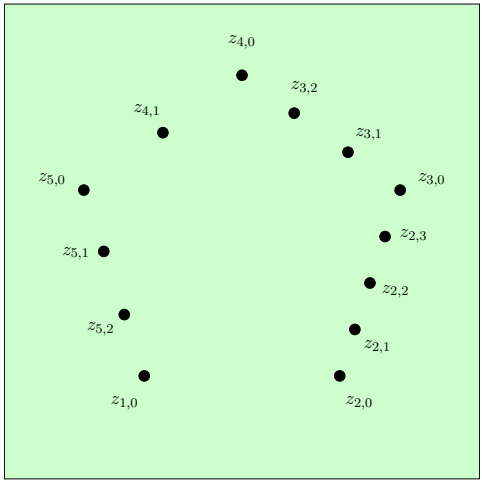

    \centering
    \includestandalone[width=0.5\textwidth]{fig-configuration-example}
    \caption{An illustration of our labeling conventions for
    a hull configuration with shape $[0; 3; 2; 1; 2]$.}
    \label{fig:conf-shape}
\end{figure}

One can quickly see that
$\HC(n)$, like $\CC(n)$, has $n!/2$ minimal elements, each of which
corresponds to a unique way of linearly ordering $n$ points up to reversal.
In higher ranks, the number of elements is counted as follows.

\begin{prop}
    Let $3\leq k\leq n$. The number of elements of rank $k$ in $\HC(n)$ is 
    $(n-1)! \binom{n}{k}$. In particular, $\HC(n)$ has $(n-1)!$ maximal
    elements.
\end{prop}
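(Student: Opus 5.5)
We count elements of rank $k\geq 3$ in $\HC(n)$ by identifying each such convexity class with combinatorial data and then counting that data. A hull configuration $P$ of rank $k\geq 3$ is determined, up to convexity class, by two pieces of information. The first is the counterclockwise cyclic order of its $n$ labelled points around $\partial\conv(P)$. The second is which $k$ of these points are vertices of $\conv(P)$; the remaining $n-k$ points lie in the interiors of sides and are the internal collinearities. So the first step is to show that the convexity class of $P$ corresponds bijectively to a pair (cyclic order on $\{z_1,\dots,z_n\}$, $k$-subset $V$ of vertices).

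For the forward direction, the cyclic order and vertex set are read off directly. They are constant on the path component $X_P$: along a path in $L^{-1}(L(P))$ no collinearity can be created or destroyed. Nor can a vertex become internal, since that changes $L$. Points cannot pass each other on the boundary without passing through a configuration with a different collinearity pattern. The one subtle point is that the configuration might leave the class of hull configurations, with a point moving into the interior. That would require a side point to leave its line, destroying a collinearity and changing $L$. A vertex could also move inward, but then it would become internal or destroy the convexity of the pattern.

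For the reverse direction, take any cyclic order together with a $k$-subset $V$. Place the points of $V$ as vertices of a convex $k$-gon in that cyclic order, and put each remaining point on the side determined by the cyclic order. This is realizable. Two such realizations with the same data lie in the same path component of $L^{-1}(L(P))$. To see this, linearly interpolate between the two convex $k$-gons after matching vertex positions, keeping the polygon convex by moving through convex position, and slide the side points along their sides. I must also check that no unexpected collinearities appear during the path, for example three vertices becoming collinear or a side point becoming collinear with two far-away points. This can be avoided by a generic perturbation of the path, since the bad loci have codimension at least one within the relevant parameter space. I expect this connectivity step to be the main obstacle, and I would handle it with the standard fact that the space of convex polygons with prescribed vertex order is contractible, together with a general-position argument.

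Finally we count. Cyclic orders of $n$ labelled points number $(n-1)!$. These are counted without identifying reflections, because the orientation (counterclockwise) is intrinsic to a configuration in $\mathbb{C}$, and a reflection is not a deformation. Indeed, reflected configurations cannot be joined through configurations with the same collinearity data, since orientation of a non-collinear triple of vertices is preserved along such paths. Any $k$-subset with $k\geq 3$ may serve as the vertex set: any set of at least three points in cyclic order can be the vertices of a convex polygon, with the others placed on sides. This gives $(n-1)!\binom{n}{k}$ elements of rank $k$. For $k=n$ every point is a vertex, giving $(n-1)!$ maximal elements.
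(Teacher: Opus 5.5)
Your proposal is correct and takes the same route as the paper. The paper simply asserts that a rank-$k$ hull configuration is determined by a cyclic order on the $n$ points together with an independent choice of $k$ of them as vertices, giving $(n-1)!\binom{n}{k}$; your arguments for invariance along paths in $L^{-1}(L(P))$ and for not identifying reflections fill in what the paper leaves implicit. In your connectivity step, vertex-wise straight-line interpolation can degenerate (a square and its $180^\circ$ rotation about its center collapse to a point at time $1/2$), and a generic path cannot avoid codimension-one loci, but you need neither: with the vertices in strictly convex position and the other points in the open sides, the only collinear triples lie on a common side, so any path through such realizations stays in $L^{-1}(L(P))$, and such a path exists because labelled convex $k$-gons with a fixed counterclockwise order form a connected space.
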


\begin{proof}
    A hull configuration of rank $k$ is uniquely determined by two
    independent choices: one of the $(n-1)!$ different cyclic orders for
    the $n$ points, and one of the $\binom{n}{k}$ different ways to
    choose $k$ vertices from those $n$ points.
\end{proof}

This poset also satisfies some other interesting 
enumerative formulas, presented in the following proposition.
Recall that for an element $x$ in a poset $R$, the 
\emph{lower set} is the subposet $\low(x) = \{y \in R \mid y \leq x\}$ and the
\emph{upper set} is the subposet $\upper(x) = \{y \in R \mid x \leq y\}$.
See Figures~\ref{fig:cc-lower-set} and \ref{fig:cc-upper-set} for examples
in $\CC(4)$.

\begin{figure}
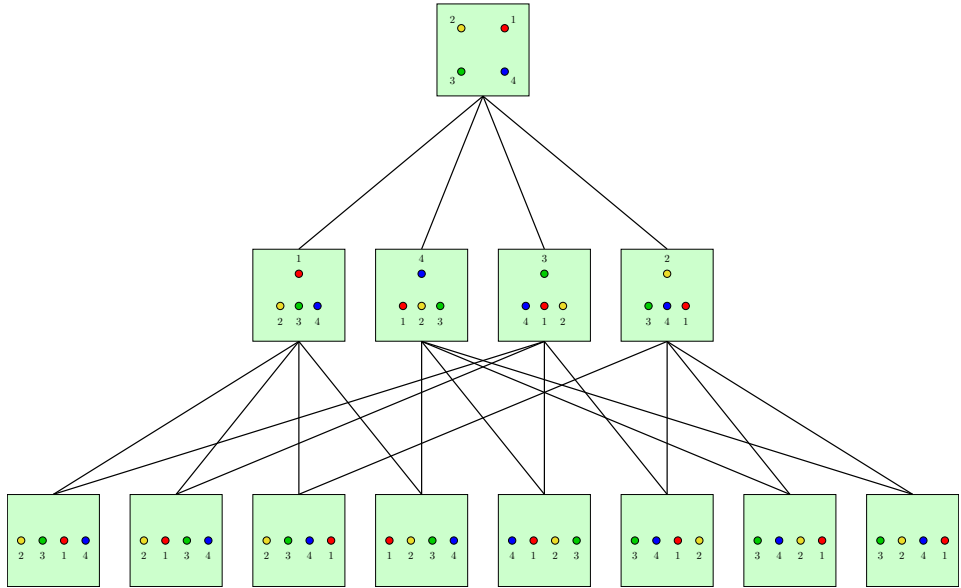

    \centering
    \includestandalone[width=\textwidth]{fig-cc-lower-set}
    \caption{The lower set for a maximal element of $\CC(4)$, the poset of 
    convexity classes with four points.}
    \label{fig:cc-lower-set}
\end{figure}

\begin{prop}
    \label{prop:hc-enum-upper-lower-sets}
    Each minimal element of $\HC(n)$ has $2^{n-2}$ maximal elements
    above it; each maximal element of $\HC(n)$ has $n2^{n-3}$ minimal
    elements below it.
\end{prop}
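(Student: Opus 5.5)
The plan is to describe both relations explicitly in terms of orders of points. A minimal element of $\HC(n)$ is a linear order of the points up to reversal. A maximal element is a cyclic (oriented) order of the points, which is why there are $(n-1)!$ of them. The key step is a single characterization. Let $Q$ be maximal with cyclic order $(z_1,\ldots,z_n)$, and let $P$ be minimal with linear order $w_1,\ldots,w_n$ up to reversal. I claim that $P\le Q$ if and only if the cyclic order of $Q$ is obtained as follows: start at $w_1$, run through some subset $S\subseteq\{w_2,\ldots,w_{n-1}\}$ in increasing order, reach $w_n$, then return to $w_1$ through the complementary set in decreasing order. Call such a pair \emph{compatible}; it means the linear order is ``unimodal'' with respect to the polygon.

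\emph{Necessity.} Suppose $X_P$ lies in the closure of $X_Q$. Take a configuration of $Q$ close to a collinear configuration in $X_P$, and use coordinates with the limiting line horizontal. The $x$-coordinates of this nearby configuration are ordered as $w_1<\cdots<w_n$. A convex polygon splits at its leftmost and rightmost vertices into an upper chain and a lower chain, and along each chain the $x$-coordinate is monotone. So the cyclic order has exactly the compatible form, with $S$ the upper chain.

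\emph{Sufficiency.} Given $S$, place the points of $S$ on the upper half of an ellipse and the rest on the lower half, with $x$-coordinates prescribed by the order $w_1<\cdots<w_n$. This is a convex configuration in $X_Q$. Scaling the $y$-coordinate by $t\to 0$ keeps every point a vertex of a convex polygon for $t>0$, and at $t=0$ yields a configuration in $X_P$. Hence $X_P\subseteq\overline{X_Q}$, and since $L(Q)=0\le L(P)$ we get $P\le Q$. The point needing care is that this path stays inside the convexity class of $Q$ for $t>0$ and inside that of $P$ at $t=0$. Beyond that, the only real obstacle is confirming that the closure condition allows nothing beyond these unimodal orders, and that is exactly the chain-monotonicity argument above.

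\emph{First count.} Fix $P$ and choose its representative $w_1,\ldots,w_n$. Each subset $S\subseteq\{w_2,\ldots,w_{n-1}\}$ gives a cyclic order. Distinct subsets give distinct oriented cyclic orders, because $w_1$ and $w_n$ are determined and $S$ is read off as the vertices met going counterclockwise from $w_1$ to $w_n$. Replacing the representative by its reversal just replaces $S$ by its complement, so the set of cyclic orders obtained does not change. Hence exactly $2^{n-2}$ maximal elements lie above $P$.

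\emph{Second count.} Fix $Q$. A compatible linear order is determined by three choices:
\begin{itemize}
\item an unordered pair $\{a,b\}$ of endpoints;
\item which endpoint comes first, a choice that is absorbed by reversal;
\item an interleaving of the two open arcs of the cycle between $a$ and $b$, of sizes $p$ and $q$ with $p+q=n-2$, each arc kept monotone.
\end{itemize}
This gives $\binom{n-2}{p}$ linear orders up to reversal for each pair. Distinct data give distinct minimal elements, because the endpoints and the arcs can be recovered from the linear order. Fixing $a$ and letting $b$ range over the other $n-1$ points, $p$ takes each value $0,\ldots,n-2$ exactly once. Summing over ordered pairs and halving gives
\[
\tfrac12\, n\sum_{p=0}^{n-2}\binom{n-2}{p}=n2^{n-3}.
\]
As a sanity check, $n=4$ gives $4\cdot1+2\cdot2=8$.
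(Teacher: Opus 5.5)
Your proof is correct. The first count is essentially the paper's argument made rigorous. The paper simply asserts that the maximal elements above a collinear $P$ come from independently pushing each of the $n-2$ internal points above or below the line. You instead justify this through the unimodal characterization of $P\le Q$, and you check that switching to the reversed representative of $P$ only replaces $S$ by its complement.

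The second count takes a genuinely different route. The paper never describes the lower set of a maximal element. It double counts comparable (minimal, maximal) pairs, of which there are $(n!/2)\cdot 2^{n-2}$ by the first count. It then uses the symmetry that relabeling the points permutes the $(n-1)!$ maximal elements transitively. Hence each maximal element has $(n!/2)2^{n-2}/(n-1)! = n2^{n-3}$ minimal elements below it.

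That argument is shorter and reuses the first count, but it depends on transitivity and on the global totals $n!/2$ and $(n-1)!$. Your enumeration by endpoint pair and interleaving of the two boundary arcs is independent of the first count. It gives an explicit description of the lower set, and it refines the total: each pair of endpoints with $p$ points strictly between them on one side contributes $\binom{n-2}{p}$ minimal elements. In your approach the paper's double count becomes a consistency check rather than the proof.

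One step in the sufficiency direction deserves a sentence more, since you exhibit only one configuration of $X_P$ inside $\overline{X_Q}$. This is enough because $X_P$ consists exactly of the collinear configurations with the given order up to reversal. Indeed, a non-collinear configuration in $L^{-1}(L(P))$ would have at least three convex hull vertices, all of multiplicity $0$, whereas $L(P)$ has only two zero entries. Your ellipse construction with arbitrary increasing $x$-coordinates, followed by orientation-preserving rigid motions, reaches all of these configurations.
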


\begin{proof}
    For the first claim, observe that each minimum element of $\HC(n)$
    is a configuration $P$ of $n$ points, of which two are external and $n-2$ 
    are internal collinearities. If we imagine these points arranged on
    a horizontal line segment, then each maximal configuration in $\HC(n)$
    above $P$ is determined by a choice of moving each internal collinearity
    either above or below the line. Since these choices can be made 
    independently of one another, there are $2^{n-2}$ maximal elements 
    above $P$ in $\HC(n)$. 

    To see the second claim, recall that $\HC(n)$ has $(n-1)!$ maximal 
    elements and $n!/2$ minimal elements. By symmetry, this means that
    the number of minimal elements below a given maximum of $\HC(n)$ is
    equal to $(n!/2)2^{n-2}/(n-1)!$, which simplifies to $n2^{n-3}$.
\end{proof}

\begin{figure}
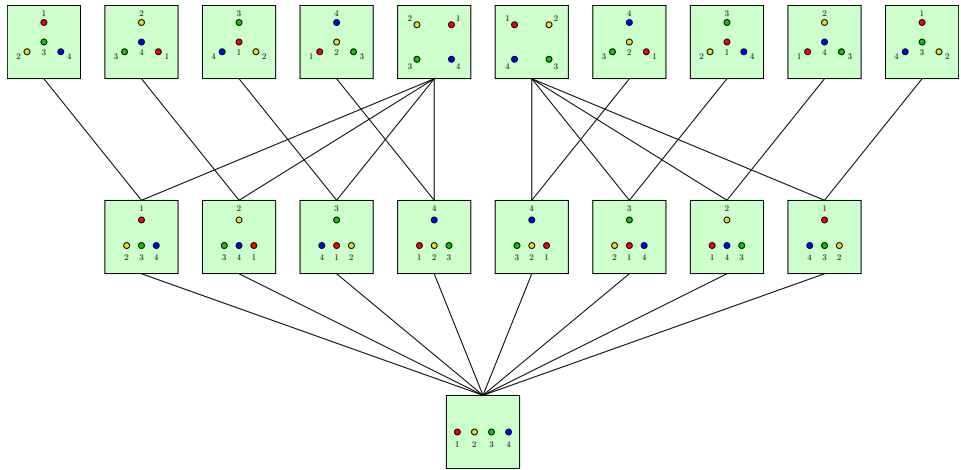

    \centering
    \includestandalone[width=\textwidth]{fig-cc-upper-set}
    \caption{The upper set for a minimal element of $\CC(4)$, the poset of 
    convexity classes with four points. Note that only two of the ten maximal
    elements also belong to $\HC(4)$, the poset of hull configurations
    with four points.}
    \label{fig:cc-upper-set}
\end{figure}

We close this section by noting that intervals in $\HC(n)$ have 
a familiar structure.

\begin{prop}
    If $P\leq Q$ in $\HC(n)$, then the interval $[P,Q]$
    is isomorphic to a Boolean lattice of height $rk(Q) - rk(P)$.
\end{prop}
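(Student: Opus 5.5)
The plan is to identify the elements of $[P,Q]$ with the subsets of a fixed set of size $\rk(Q)-\rk(P)$, namely the set of vertices of $\conv(Q)$ that are not vertices of $\conv(P)$.

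\emph{Setup.} Fix a hull configuration $Q$ and recall from the proof that counts elements of rank $k$ that a hull configuration of rank $k\ge 3$ is determined by a cyclic order on its $n$ points together with a choice of $k$ of them to serve as vertices of the convex hull. The points that are not vertices are internal collinearities with multiplicity $1$; the vertices have multiplicity $0$.

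\emph{Step 1: describe $\upper(P)$-type relations via vertex sets.} First handle the case where $P$ has rank at least $3$. I would show that $P\le Q$ holds if and only if two conditions are met:
\begin{itemize}
\item $P$ and $Q$ induce the same cyclic order;
\item the vertex set $V(P)$ is contained in $V(Q)$.
\end{itemize}

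The forward direction follows from the definition of $\CC(n)$. We have $L(Q)\le L(P)$ coordinatewise, so every vertex of $P$ (multiplicity $0$ in $P$) also has multiplicity $0$ in $Q$. The cyclic order is preserved because $X_P$ lies in the closure of $X_Q$. Along a degeneration through hull configurations, points stay on the boundary of the convex hull, and the boundary order cannot change without a collision or passing through a collinearity that is later removed.

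For the converse, given $V(P)\subseteq V(Q)$ with the same cyclic order, I would exhibit an explicit deformation. Straighten the vertices in $V(Q)\setminus V(P)$ one at a time by sliding each onto the segment joining its neighbouring vertices of $P$. This introduces collinearities without removing any, and the path components match because convexity classes of a given shape are connected (a convex polygon with marked boundary points in a fixed order is a connected family).

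\emph{Step 2: the case $\rk(P)=2$.} Here $P$ is a linear order up to reversal, and it has exactly two vertices, its endpoints. The condition becomes: the two endpoints of $P$ lie in $V(Q)$, and the cyclic order of $Q$ restricts to the linear order of $P$ when the cycle is cut at those two points. The intermediate configurations $R$ with $P\le R\le Q$ are again determined by their vertex sets $V(R)$ with $V(P)\subseteq V(R)\subseteq V(Q)$. Here I treat any $R$ with $|V(R)|=2$ as $P$ itself, and I use the fact that once the two endpoints are fixed, the cyclic order of $R$ is inherited from $Q$.

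\emph{Step 3: conclude.} The map $R\mapsto V(R)\setminus V(P)$ is an order isomorphism from $[P,Q]$ onto the Boolean lattice of subsets of $V(Q)\setminus V(P)$. It is order-preserving in both directions by Step 1. It is surjective because every intermediate vertex set is realized: keeping the cyclic order of $Q$ and choosing any $V$ with $V(P)\subseteq V\subseteq V(Q)$ gives a hull configuration. Since $|V(Q)\setminus V(P)|=\rk(Q)-\rk(P)$, this completes the proof.

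The main obstacle I expect is the rigorous topological part of Step 1. Two things must be shown: that the closure condition on path components really forces the cyclic order and vertex containment, and conversely that the straightening deformation lands in the correct path component $X_P$. For the latter, I would show that hull configurations with a prescribed cyclic order and vertex set form a single path component. My approach is to parametrize them by a convex polygon together with points on its sides, which is a connected (indeed contractible) space.
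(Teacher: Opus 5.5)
Your proposal is correct and is essentially the paper's argument. The paper identifies the elements of $[P,Q]$ with subsets of the $\rk(Q)-\rk(P)$ new collinearities that the motion from $Q$ to $P$ introduces, which are exactly the points of $V(Q)\setminus V(P)$; you make its claim that these ``can be added independently'' precise by characterizing $\le$ on $\HC(n)$ through the cyclic order and containment of vertex sets. One small slip in Step~2: cutting the cycle of $Q$ at the two endpoints gives two arcs, and the correct condition for $P\le Q$ is that the linear order of $P$ interleaves these arcs (each read from one endpoint to the other), but since this condition depends only on the cyclic order, which every intermediate $R$ shares with $Q$, your conclusion is unaffected.
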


\begin{proof}
    Suppose that $P\leq Q$ and $rk(Q) - rk(P) = k$. Then there
    is a motion which takes $Q$ to $P$ while preserving all existing
    internal collinearities in $Q$ and adding $k$ new ones. These
    new collinearities can be added independently in any order, and
    so each element of $[P,Q]$ in $\HC(n)$ is completely determined 
    by a subset of these $k$ internal collinearities. Thus 
    the interval $[P,Q]$ is isomorphic to $\bool(k)$,
    the poset of subsets of a $k$-element set.
\end{proof}

\section{Symmetric chain decompositions and rank-symmetry}
\label{sec:scd}

In this section we introduce the poset of noncrossing partitions
associated to a convexity class and examine some its properties.
In particular, we show that the lattice $\NC(P)$ has a symmetric
chain decomposition when the configuration $P$ has a blank side,
proving Theorem~\ref{mainthm:scd}. First, we give a 
definition for $\NC(P)$.

\begin{defn}
    For each $P \in \conf_n(\mathbb{C})$, a set partition
    of $P$ into subsets is \emph{noncrossing} if the convex
    hulls of the subsets are pairwise disjoint. The set of all
    partitions for $P$, denoted $\Pi(P)$, forms a lattice under
    the partial order of refinement, and the subset of all
    noncrossing partitions forms a subposet denoted $\NC(P)$.
    Note that there is a natural isomorphism between $\NC(P)$ and
    $\NC(Q)$ whenever $P$ and $Q$ belong to the same
    convexity class, so it is safe to define the lattice
    $\NC(P)$ for each $P \in \CC(n)$.
\end{defn}

As described in the introduction, $\NC(P)$ is isomorphic to
the classical lattice of noncrossing partitions $\NC(n)$ when
$P$ is a maximal element of $\HC(n)$. 
Next, we note that the partial order on $\CC(n)$ corresponds to 
inclusion of the corresponding lattices of noncrossing partitions.

\begin{prop}
    \label{prop:nc-subposets}
    If $P\leq Q$ in $\CC(n)$, then $\NC(P)$ is isomorphic to a subposet
    of $\NC(Q)$. In particular, if $P \in \HC(n)$, then $\NC(P)$ is
    isomorphic to a subposet of the classical noncrossing partition lattice
    $\NC(n)$.
\end{prop}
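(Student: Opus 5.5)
The plan is to use the labeling of points that a deformation provides, and check that crossing is preserved when collinearities are removed. Suppose $P \leq Q$ in $\CC(n)$. By Definition~\ref{def:poset-of-conv-classes}, $X_P$ lies in the closure of $X_Q$. So there is a continuous path $\gamma\colon[0,1]\to\conf_n(\mathbb{C})$ with $\gamma(0)=P$ (after replacing $P$ within its class) and $\gamma(t)\in X_Q$ for $t>0$. This path gives a bijection between the points of $P$ and the points of $Q$, namely $z_i(0)\leftrightarrow z_i(t)$, and hence a bijection $\phi\colon\Pi(P)\to\Pi(Q)$ that preserves refinement in both directions. It therefore suffices to show that $\phi$ maps $\NC(P)$ into $\NC(Q)$. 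Equivalently, if a partition is crossing for $\gamma(t)$ with $t>0$ small, then it is crossing for $\gamma(0)$.

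The key step is a closedness argument. Take blocks $B,B'$ whose convex hulls intersect in the configuration $\gamma(t)$ for a sequence $t\to 0$. Convex hulls of finitely many points vary continuously in the Hausdorff metric, and the condition that two compact sets meet is closed. Hence $\conv(B(0))\cap\conv(B'(0))\neq\emptyset$, so the partition is also crossing for $P$. Taking the contrapositive, every noncrossing partition of $P$ is noncrossing for $\gamma(t)$ when $t$ is small.

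It remains to pass from small $t$ to $Q$ itself, and this is where I expect the main obstacle. All configurations in $X_Q$ should have the same set of noncrossing partitions; this is the remark made after the definition of $\NC(P)$. I would justify it as follows. Along a path inside $X_Q$ the multiplicity vector $L$ is constant, so no triple becomes collinear and no collinear triple separates. The orientation of every triple, meaning the sign of the corresponding determinant, is therefore constant along the path, since it can only change by passing through zero. Whether the hulls of two point sets intersect is determined by these orientations (the order type), because hull intersection reduces to segment–segment and point-in-triangle tests, which are sign conditions. I would assume or briefly verify this last fact. Hence $\NC(\gamma(t))=\NC(Q)$ under the labeling, so $\phi(\NC(P))\subseteq\NC(Q)$ as a subposet.

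For the final claim, if $P\in\HC(n)$, then $P$ lies below some maximal element $Q$ of $\HC(n)$; every element is obtained from a maximal one by collapses. Since $Q$ is in convex position, $\NC(Q)\cong\NC(n)$, and the first part gives the result.
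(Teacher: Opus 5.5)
Your proposal is correct and follows the paper's argument. The deformation identifies $\Pi(P)$ with $\Pi(Q)$, and a partition that is crossing for $Q$ stays crossing once collinearities are added; your Hausdorff-limit argument supplies the justification the paper only asserts. Hence $\NC(P)$ order-embeds in $\NC(Q)$, and since the maximal elements of $\HC(n)$ are in convex position, the second claim follows with $\NC(n)$. Your embedding runs in the direction the statement requires (the paper's written proof slips and restricts $m$ to $\NC(Q)$ instead). One small repair is needed in your order-type step: in the fully collinear case every orientation is $0$, so there you should argue instead that the linear order along the line cannot change without two points colliding.
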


\begin{proof}
    If $P\leq Q$ in $\CC(n)$, then $P$ can be obtained from $Q$ by a
    continuous motion $m\colon Q \to P$ which preserves all existing collinearities 
    and potentially adds some new ones. This induces an isomorphism of
    partition lattices $m \colon \Pi(Q) \to \Pi(P)$, and no crossing partition
    of $Q$ can become noncrossing by adding collinearities, which means
    that restricting the domain of $m$ to $\NC(Q)$ yields an injective 
    order-preserving map $m \colon \NC(Q) \to \NC(P)$. Thus, $\NC(Q)$ is
    isomorphic to a subposet of $\NC(P)$. 
    
    Moreover, if $M$ is a maximal 
    element of $\HC(n)$, then it is represented by a configuration of $n$ 
    points in convex position, and thus $\NC(M)$ is isomorphic to the 
    classical noncrossing partition lattice $\NC(n)$. It then follows that
    for all $P \in \HC(n)$, we have that $\NC(P)$ is isomorphic to a
    subposet of $\NC(n)$.
\end{proof}

The poset $\NC(n)$ has a wealth of interesting properties; to
name a few, it is \emph{bounded} (it has a unique minimum $\hat{0}$
and a unique maximum $\hat{1}$), it is a \emph{lattice} (for all $x,y 
\in \NC(n)$, there is a unique meet $x \wedge y$ and a unique join 
$x \vee y$), and it is \emph{graded} with rank function 
$\rk \colon \NC(P) \to \mathbb{Z}$ given by defining $\rk(\pi)$ to
be $n-k$, where $k$ is the number of blocks in $\pi$. 

Some of the properties satisfied by $\NC(n)$ are also satisfied by the
more general definition of $\NC(P)$. For example, $\NC(P)$ is a bounded
lattice for every choice of convexity class $P$ \cite[Proposition~2.3]{cdhm24}.
In addition, it was shown in \cite[Theorem~2.3]{dfjllpr25} that 
when $P$ is a hull configuration, $\NC(P)$ is graded. In the remainder 
of this section, we show that in certain circumstances, $\NC(P)$ has a 
symmetric chain decomposition, a property which is also shared by $\NC(n)$.

\begin{defn}
    Let $P \in \HC(n)$. A subset of $\NC(P)$ is \emph{centered}
    if for each $k$, it has an equal number of elements at rank $k$ 
    and at rank $n-k$. A totally ordered subset of a poset
    is called a \emph{chain}, and a chain $\pi_1 < \cdots < \pi_k$ 
    is \emph{saturated} if each $\pi_i < \pi_{i+1}$ is a covering relation.
    Finally, $\NC(P)$ has a \emph{symmetric chain decomposition} 
    if it can be expressed as the disjoint union of centered saturated chains.
\end{defn}

Our approach for Theorem~\ref{mainthm:scd} is similar to those of
Theorem~3.2 and Theorem~4.2 in \cite{dfjllpr25}. In particular,
we use an inductive argument to express $\NC(P)$ as the disjoint 
union of centered subposets, each of which has a symmetric chain
decomposition. To this end, we introduce some notation for special
elements and intervals in $\NC(P)$ when $P$ has a blank side.

\begin{figure}
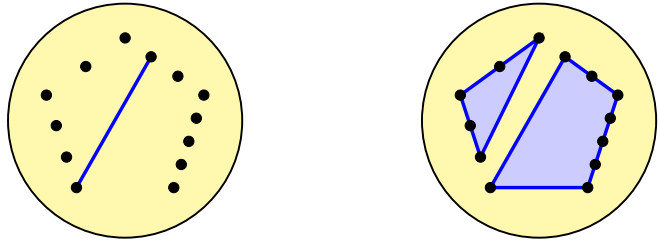

    \centering
    \includestandalone[width=0.7\textwidth]{fig-alpha-beta}
    \caption{Elements $\alpha_{3,2}$ (left) and $\beta_{3,2}$ (right) for the configuration
    $P$ depicted in Figure~\ref{fig:conf-shape}, as described in Definition~\ref{def:alpha-beta-elements}.}
    \label{fig:alpha-beta-elements}
\end{figure}

\begin{defn}
    \label{def:alpha-beta-elements}
    Suppose $P\in\HC(n)$ has at least one blank side; without loss of generality, let
    $\shape(P) = [0; c_2; \cdots; c_k]$. For each choice of $i$ and $j$ such that $2\leq i<k$ and $j\leq c_i$, we define two elements $\alpha_{ij}$ and $\beta_{ij}$ of $\NC(P)$, as well as two subsets $A_{ij}$ and $B_{ij}$ of $P$. First, let 
    $\alpha_{ij}$ be the unique atom in $\NC(P)$ with non-singleton block 
    $\{z_{1,0},z_{i,j}\}$, and let $\beta_{ij}$ be the unique coatom in $\NC(P)$ such that one block contains all points from $z_{1,0}$ to $z_{i,j}$ in the counterclockwise
    order around the boundary of $\conv(P)$, and another (nonempty) block contains all other elements of $P$. See Figure~\ref{fig:alpha-beta-elements} for an illustration and note that $\alpha_{ij}\leq\beta_{ij}$. Also, define $A_{ij}$ to be
    the subset of $P$ consisting of all points in the counterclockwise order from
    $z_{2,0}$ to $z_{i,j}$, and define $B_{ij} = P - (A_{ij} \cup \{z_{1,0}\})$;
    see Figure~\ref{fig:subconfigurations}.
\end{defn}

\begin{figure}
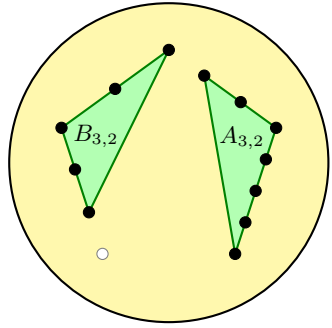

    \centering
    \includestandalone{fig-A-B}
    \caption{Subsets $A_{3,2}$ and $B_{3,2}$ of the configuration
    $P$ depicted in Figure~\ref{fig:conf-shape}, as described in Definition~\ref{def:alpha-beta-elements}. Note in particular that $A_{3,2} \cup B_{3,2} = P - \{z_{1,0}\}$.}
    \label{fig:subconfigurations}
\end{figure}

\begin{lem}
\label{lem:interval-product}
    Suppose $P \in \HC(n)$ has at least one blank side, and define the elements
    $\alpha_{ij}$ and $\beta_{ij}$ and the subsets $A_{ij}$ and $B_{ij}$ as
    in Definition~\ref{def:alpha-beta-elements}. Then 
    the interval $[\alpha_{ij},\beta_{ij}]$ is 
    isomorphic to the product $\NC(A_{ij})\times\NC(B_{ij})$.   
\end{lem}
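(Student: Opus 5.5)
The plan is to write down an explicit order-preserving bijection
\[\Phi\colon \NC(A_{ij})\times\NC(B_{ij}) \to [\alpha_{ij},\beta_{ij}]\]
and check that its inverse is also order-preserving. Given $(\sigma,\tau)$, let $\Phi(\sigma,\tau)$ be the partition of $P$ built as follows. Take the blocks of $\tau$ unchanged. Take the blocks of $\sigma$, except that the block of $\sigma$ containing $z_{i,j}$ is enlarged by adding $z_{1,0}$. The inverse sends $\pi\in[\alpha_{ij},\beta_{ij}]$ to the pair of restrictions $\pi|_{A_{ij}}$ and $\pi|_{B_{ij}}$. That the restriction map is well defined and inverse to $\Phi$ rests on two facts. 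Since $\alpha_{ij}\le\pi$, the points $z_{1,0}$ and $z_{i,j}$ lie in a common block of $\pi$. Since $\pi\le\beta_{ij}$, every block of $\pi$ lies inside $A_{ij}\cup\{z_{1,0}\}$ or inside $B_{ij}$. Every block avoiding $z_{1,0}$ therefore lies in $A_{ij}$ or in $B_{ij}$, and the block containing $z_{1,0}$ lies in $A_{ij}\cup\{z_{1,0}\}$ and contains $z_{i,j}$. Both maps are plainly order-preserving with respect to refinement. The rank bookkeeping, with $\rk(\alpha_{ij})=1$ and $|A_{ij}|+|B_{ij}|=n-1$, serves only as a sanity check.

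The substance lies in two noncrossing verifications. The first is that restriction preserves noncrossingness. A restriction of a noncrossing partition is noncrossing except possibly for the block that loses $z_{1,0}$. Removing a point only shrinks a convex hull, so disjointness is preserved. This direction is immediate.

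The second verification, which I expect to be the main obstacle, is that $\Phi(\sigma,\tau)$ is noncrossing in $P$. Here I would use the hull geometry and the blank side. The points of $A_{ij}\cup\{z_{1,0}\}$ and of $B_{ij}$ are separated by the line through $z_{1,0}$ and $z_{i,j}$. The point $z_{i,j}$ lies on side $i$, or is the vertex $z_{i,0}$. The points of $A_{ij}$ run counterclockwise from $z_{2,0}$ along the boundary, while those of $B_{ij}$ lie on the other arc. Because side 1 is blank, $z_{1,0}$ and $z_{2,0}$ are adjacent vertices with no points between them. The chord from $z_{1,0}$ to $z_{i,j}$ therefore cuts $\conv(P)$ into two convex pieces, one containing $\conv(A_{ij}\cup\{z_{1,0}\})$ and the other containing $\conv(B_{ij})$. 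One must be careful about points of $P$ lying on this chord. This can only happen when $j=0$ or $j=c_i$ and the chord is a side of the hull, and those points are assigned consistently to the correct side by the definitions. Hence blocks from the $A$-side and blocks from the $B$-side have disjoint hulls, except that they may meet along the chord. They cannot share any point of $P$, and the hulls can only touch at $z_{1,0}$ or $z_{i,j}$. Both of these points belong to the $A$-side block containing $z_{1,0}$. I would argue this carefully using the hull-configuration structure.

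It remains to show that blocks within $A_{ij}\cup\{z_{1,0}\}$ do not cross after $z_{1,0}$ is added. Let $b$ be the block of $\sigma$ containing $z_{i,j}$, and suppose some other block $b'$ of $\sigma$ met $\conv(b\cup\{z_{1,0}\})$. The points of $A_{ij}$ all lie on the boundary arc from $z_{2,0}$ to $z_{i,j}$, and $z_{1,0}$ is adjacent across the blank side. Both $z_{1,0}$ and $z_{i,j}$ are therefore extreme in the cyclic order of $A_{ij}\cup\{z_{1,0}\}$, so adding $z_{1,0}$ to $b$ acts like extending the block to an endpoint adjacent to $z_{i,j}$ in cyclic order. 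A crossing with $b'$ would then force $b'$ to meet $\conv(b)$ already, or would force $b'$ to contain a point separated by the chord, and neither is possible. I would check this, including the degenerate collinear subcases, by treating $A_{ij}\cup\{z_{1,0}\}$ as a hull configuration in its own right. The blank side ensures that $z_{1,0}$ introduces no new collinearities with points of $A_{ij}$ other than along sides of its hull.
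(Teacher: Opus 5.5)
Your proposal is correct and is essentially the paper's argument. The paper likewise identifies $[\alpha_{ij},\beta_{ij}]$ with $\NC(B_{ij})$ times the subposet of $\NC(A_{ij}\cup\{z_{1,0}\})$ in which $z_{1,0}$ and $z_{i,j}$ share a block, and it cites \cite[Lemma~2.6]{dfjllpr25} to identify that subposet with $\NC(A_{ij})$ by deleting $z_{1,0}$, which is exactly the merging step you verify by hand.

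When you write out the geometry, use that the open segment from $z_{1,0}$ to $z_{i,j}$ contains no point of $P$: it is the blank side when $(i,j)=(2,0)$, and otherwise it crosses the interior of $\conv(P)$. Consequently $B_{ij}$ lies strictly on one side of its line, and hulls of blocks on opposite sides never meet at all. Any contact, even at $z_{1,0}$ or $z_{i,j}$, would itself be a crossing, so your remark that they ``may meet along the chord'' should be dropped rather than excused. The same fact, together with the blankness of side~$1$, shows that neither segment from $z_{1,0}$ to $z_{2,0}$ or to $z_{i,j}$ has a point of $A_{ij}$ in its interior. This, with the cyclic adjacency of $z_{1,0}$ and $z_{i,j}$, is what makes the merged block noncrossing.
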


\begin{proof}
    It is straightforward to see that $[\alpha_{ij},\beta_{ij}]$ is isomorphic to the product of $\NC(B_{ij})$ and the subposet of $\NC(A_{ij}\cup\{z_{1,0}\})$ consisting of all partitions where $z_{1,0}$ and $z_{i,j}$ share a block. We know from  \cite[Lemma~2.6]{dfjllpr25} that this subposet of $\NC(A_{ij}\cup\{z_{1,0}\})$ is isomorphic to $\NC(A_{ij})$, so it follows that $[\alpha_{ij},\beta_{ij}]$ is isomorphic to $\NC(A_{ij})\times\NC(B_{ij})$.
\end{proof}

\begin{lem}
    \label{lem:disjoint-union}
    Suppose $P\in\HC(n)$ with $\shape(P) = [0; c_2; \cdots; c_k]$. Let $X$ be the subposet of $\NC(P)$ consisting of all elements in which $z_{1,0}$ is either a singleton 
    or in a block containing $z_{k,c_k}$. Then 
    $X \cong \NC(P - \{z_{1,0}\}) \times \bool(1)$, and $\NC(P)$ is the disjoint union
    of $X$ and the intervals of the form $[\alpha_{ij},\beta_{ij}]$ where 
    $2 \leq i \leq k-1$ and $0 \leq j \leq c_i$.
\end{lem}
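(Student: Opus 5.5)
The plan is to sort each $\pi \in \NC(P)$ according to the block containing $z_{1,0}$, and in particular according to the neighbour of $z_{1,0}$ in counterclockwise order within that block. Let $\pi\in\NC(P)$ and let $S$ be the block containing $z_{1,0}$. There are three cases. If $S=\{z_{1,0}\}$, or $S$ contains $z_{k,c_k}$ (the clockwise neighbour of $z_{1,0}$ along the boundary), then $\pi\in X$. Otherwise, let $z_{i,j}$ be the last point of $S$ in counterclockwise order starting from $z_{1,0}$. Note that $z_{2,0}=z_{1,1}$ sits at the other end of the blank side, so the points of $P$ counterclockwise from $z_{2,0}$ are exactly the $z_{i,j}$ with $2\le i\le k$. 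Since $S$ avoids $z_{k,c_k}$, we may take $z_{i,j}$ with $i\le k-1$. (If $z_{i,j}$ lay on side $k$ but $S$ missed $z_{k,c_k}$, we could still choose indices with $i\le k-1$ or $i=k$; I would check the labeling so that the range $2\le i\le k-1$, $0\le j\le c_i$ exactly covers all points other than $z_{1,0}$ and those on side $k$ strictly after $z_{k,0}$. I expect the statement intends $z_{k,0}=z_{k-1,c_{k-1}+1}$-type identifications to be handled by taking the index pair $(k-1,\cdot)$ or allowing $j$ up to $c_i$ with $z_{i,c_i+1}=z_{i+1,0}$. This index bookkeeping is the first thing to pin down.)

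Next I would show that, in the third case, $\pi\in[\alpha_{ij},\beta_{ij}]$. Certainly $\alpha_{ij}\le\pi$, because $z_{1,0}$ and $z_{i,j}$ share the block $S$. For $\pi\le\beta_{ij}$, every block other than $S$ must lie entirely within the arc $A_{ij}\cup\{z_{1,0}\}$ or entirely within $B_{ij}$. The reason is that the segment from $z_{1,0}$ to $z_{i,j}$ lies in $\conv(S)$, and in a hull configuration it separates $\conv(P)$ into the two arcs. Here I must use that $z_{1,0}$ and $z_{i,j}$ are not on a common side (i.e.\ $i\neq 1$, using blankness of side 1 when $i=2,j=0$). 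I must also use that all points are on the boundary, so a block meeting both arcs would have a hull crossing that segment. Meanwhile $S$ itself lies in the first arc by the choice of $z_{i,j}$ as last. Conversely, any $\pi$ in $[\alpha_{ij},\beta_{ij}]$ has $z_{1,0}$ grouped with $z_{i,j}$, and has no other points of $B_{ij}$, in particular not $z_{k,c_k}$. So the intervals are disjoint from $X$, and from each other, since $z_{i,j}$ is recovered as the last element of $S$. Hence $\NC(P)=X\sqcup\bigsqcup[\alpha_{ij},\beta_{ij}]$.

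Finally, for $X\cong\NC(P-\{z_{1,0}\})\times\bool(1)$, define the map sending $\pi$ to the pair consisting of $\pi$ restricted to $P-\{z_{1,0}\}$ and the indicator of whether $z_{1,0}$ is a singleton. The inverse takes a noncrossing partition $\sigma$ of $P-\{z_{1,0}\}$ and either adds $\{z_{1,0}\}$ as a singleton, or inserts $z_{1,0}$ into the block of $z_{k,c_k}$. The key check, which I expect to be the main obstacle, is that this insertion preserves noncrossingness. Since side 1 is blank, $z_{1,0}$ is a vertex of $\conv(P)$, and the triangle formed by $z_{1,0}$, $z_{k,c_k}$ and $z_{2,0}$ contains no other points. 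Adding $z_{1,0}$ to the block $T\ni z_{k,c_k}$ enlarges $\conv(T)$ only by a region near the corner, bounded by segments from $z_{1,0}$ to points of $T$. Any block hull meeting that new region would have to contain a point in the open arc between $z_{1,0}$ and the nearest points of $T$ on either side. But the arc on one side is empty (adjacent to $z_{k,c_k}$), and on the other side the block would then cross $\conv(T)$ already, a contradiction. Both maps are order preserving, since refinement among partitions in $X$ commutes with this description: merging blocks can only turn a singleton $z_{1,0}$ into one joined with $z_{k,c_k}$, which corresponds to the order on $\bool(1)$. This gives the isomorphism.
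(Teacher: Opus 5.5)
Your structure matches the paper's. You sort $\pi$ by the block $S$ of $z_{1,0}$, send $\pi\notin X$ to the interval indexed by the last point of $S$, and identify $X$ by restriction and insertion. Your chord-separation argument for $\pi\le\beta_{ij}$ and the converse giving disjointness are fine.

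\textbf{The gap: the covering step for the stated index range.} You assert that, because $S$ avoids $z_{k,c_k}$, its last point has $i\le k-1$, but you give no reason. Your parenthetical then treats ``last point on side $k$ with $z_{k,c_k}\notin S$'' as a genuine case. It also describes the range as containing $z_{k,0}$; it does not, since it ends at $z_{k-1,c_{k-1}}$. Finally, it proposes re-indexing $z_{k,0}$ as $z_{k-1,c_{k-1}+1}$ to absorb that case. That repair would be wrong: for $c_k\ge 1$, the partition whose only non-singleton block is $\{z_{1,0},z_{k,0}\}$ is not noncrossing, so no such interval exists.

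The missing idea is a one-line collinearity argument. The points $z_{k,0},z_{k,1},\ldots,z_{k,c_k},z_{1,0}$ lie in this order on side $k$. So if $S$ contains $z_{1,0}$ and some $z_{k,j}$, then $z_{k,c_k}\in[z_{k,j},z_{1,0}]\subseteq\conv(S)$, and noncrossingness forces $z_{k,c_k}\in S$, i.e.\ $\pi\in X$. Hence for $\pi\notin X$ with $S\neq\{z_{1,0}\}$, the last point of $S$ lies among $z_{2,0},\ldots,z_{k-1,c_{k-1}}$. This is exactly the range $2\le i\le k-1$, $0\le j\le c_i$. The paper simply asserts this as ``necessary''; you need the argument, not a re-indexing.

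\textbf{A secondary flaw: the insertion step.} It reaches the right conclusion, but the reasoning misplaces the blankness hypothesis.
\begin{itemize}
\item The point $z_{1,0}$ is a vertex of $\conv(P)$ by the labeling, whether or not side 1 is blank.
\item The closed triangle $z_{1,0}z_{k,c_k}z_{2,0}$ can contain other points of $P$. If $k=3$ and $c_3=0$ it is all of $\conv(P)$, including the interior points of side 2.
\item That triangle is not the relevant region anyway. What gets added to $\conv(T)$ is the triangle on $z_{k,c_k}$, $z_{1,0}$ and $t^+$, where $t^+$ is the first point of $T$ counterclockwise from $z_{1,0}$.
\end{itemize}
Your sentence ``on the other side the block would then cross $\conv(T)$ already'' is false as written. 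A block lying entirely in the open arc from $z_{1,0}$ to $t^+$ need not meet $\conv(T)$; take the singleton $\{z_{2,0}\}$ with $T=P-\{z_{1,0},z_{2,0}\}$.

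What you actually need is two facts. First, such a block lies on the far side of the chord $[z_{1,0},t^+]$ from the added triangle. Second, a block not confined to that arc either sits in a different pocket cut off by $\conv(T)$, away from the triangle, or straddles two pockets and so already meets $\conv(T)$. The chord separation is exactly where blankness of side 1 enters. When $t^+=z_{2,0}$ the chord is side 1 itself. If side 1 had an interior point forming its own block, that point would end up inside $\conv(T\cup\{z_{1,0}\})$, and the insertion would create a crossing.
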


\begin{proof}
    For the first claim, note that the subposet of $\NC(P)$ in which $z_{1,0}$ is a singleton is isomorphic to $\NC(P-\{z_{1,0}\})$, as is the subposet of $\NC(P)$ in which $z_{1,0}$ shares a block with $z_{k,c_k}$. Each element in the former subposet is
    covered by a unique element of the latter, obtained by combining the block containing
    $z_{k,c_k}$ with the singleton block $\{z_{1,0}\}$. From here, it is straightforward to see that $X = \NC(P-\{z_{1,0}\})\times\bool(1)$.

    Next, let $\pi \in \NC(P)$ and note that there are three distinct 
    possibilities for $z_{1,0}$ in $\pi$. If $\{z_{1,0}\}$ is a singleton block of
    $\pi$, then $\pi \in X$. If $z_{1,0}$ shares a block with $z_{k,c_k}$, then 
    we again have that $\pi \in X$. Otherwise, let $z_{i,j}$ be the last point in
    $P$ which shares a block with $z_{1,0}$ with respect to the counterclockwise
    order on $P$ which begins at $z_{1,0}$. Necessarily we have 
    $2 \leq i \leq k-1$ and $0 \leq j \leq c_i$, and in particular $\pi$ belongs to
    $[\alpha_{ij},\beta_{ij}]$ and no other interval of this type. Therefore, 
    $\NC(P)$ can be expressed as the desired disjoint union.
\end{proof}

We are now ready to prove Theorem~\ref{mainthm:scd}.

\begin{thm}
    \label{thm:scd}
    If $P \in \HC(n)$ has at least one blank side, then $\NC(P)$ has
    a symmetric chain decomposition.
\end{thm}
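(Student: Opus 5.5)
We argue by strong induction on $n$, using the decomposition of Lemma~\ref{lem:disjoint-union}. The idea is to write $\NC(P)$ as a disjoint union of pieces, each of which is centered in $\NC(P)$ and has a symmetric chain decomposition. Concatenating these decompositions then gives one for $\NC(P)$.

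For the base cases ($n\le 3$, or $P$ of rank $2$) the lattice is Boolean, and symmetric chain decompositions of Boolean lattices exist by de Bruijn et al. We will also use two standard facts:
\begin{itemize}
\item the product of posets with symmetric chain decompositions again has one, centered at the sum of the centers;
\item a saturated chain inside an interval $[\alpha,\beta]$ of a graded poset stays saturated in the whole poset.
\end{itemize}

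Fix $P$ with $\shape(P)=[0;c_2;\cdots;c_k]$.

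\textbf{The piece $X$.} By Lemma~\ref{lem:disjoint-union}, $X\cong \NC(P-\{z_{1,0}\})\times\bool(1)$. We need $\NC(P-\{z_{1,0}\})$ to have a symmetric chain decomposition, which follows by induction once we check that $P-\{z_{1,0}\}$ is a hull configuration with a blank side.

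Deleting the vertex $z_{1,0}$ turns the blank side $1$ and side $k$ into a new hull whose edge joins $z_{2,0}$ to $z_{k,c_k}$. The new hull has $z_{2,0}$ and $z_{k,c_k}$ as vertices and no other points of $P$ on that edge, so the edge is blank. The same holds in degenerate cases: if the result is collinear, it is Boolean. If $k=3$, we check that the points still lie on the boundary of the new hull.

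The rank of $X$ inside $\NC(P)$ runs from $0$ to $n-1$. Since $\NC(P-\{z_{1,0}\})$ is centered about $(n-2)/2$ and the factor $\bool(1)$ adds $1/2$, the center is $(n-1)/2$, so $X$ is centered in $\NC(P)$.

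\textbf{The intervals.} By Lemma~\ref{lem:interval-product}, $[\alpha_{ij},\beta_{ij}]\cong\NC(A_{ij})\times\NC(B_{ij})$. We need both factors to be hull configurations with a blank side, or to be small or Boolean.

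\begin{itemize}
\item $A_{ij}$ runs counterclockwise from $z_{2,0}$ to $z_{i,j}$. Its hull has the chord from $z_{i,j}$ back to $z_{2,0}$ as a side, and that chord contains no other points of $A_{ij}$ because the points lie on the boundary of a convex polygon. So it is blank, unless $A_{ij}$ is collinear, in which case $\NC(A_{ij})$ is Boolean.
\item $B_{ij}$ runs from the successor of $z_{i,j}$ to $z_{k,c_k}$. Its closing chord is likewise blank, so it has the same property.
\end{itemize}

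By induction both factors have symmetric chain decompositions, hence so does their product, and the chains remain saturated in $\NC(P)$.

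For centering, write $a=|A_{ij}|$ and $b=|B_{ij}|$, so $a+b=n-1$. The rank of $\alpha_{ij}$ is $1$ and the rank of $\beta_{ij}$ is $n-2$. The product has height $(a-1)+(b-1)=n-3$, which matches $\rk\beta_{ij}-\rk\alpha_{ij}$ and is consistent with gradedness. The interval is therefore centered at $1+(n-3)/2=(n-1)/2$, as required.

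\textbf{Main obstacle.} The step requiring the most care is the claim that every subconfiguration $P-\{z_{1,0}\}$, $A_{ij}$, $B_{ij}$ is again a hull configuration with a blank side, or is degenerate (collinear, or with at most two points). This includes edge cases such as $j=0$, $i=2$, or $A_{ij}$ lying entirely on one side. Here we would argue directly from convexity: any subset of points on the boundary of a convex polygon lies on the boundary of its own hull, and the closing chord through two nonadjacent boundary points contains no other points of the subset unless everything is collinear. The remaining checks are the interval isomorphism's rank shift and the gradedness of $\NC(P)$ from \cite{dfjllpr25}.
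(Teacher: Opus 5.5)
Your proposal is correct and follows the paper's proof essentially step for step. It uses strong induction on the decomposition $\NC(P)=X\sqcup\bigsqcup[\alpha_{ij},\beta_{ij}]$, identifies each piece as a product of smaller noncrossing lattices, and also supplies the centering and rank-shift computations and the collinear/single-point cases for $P-\{z_{1,0}\}$, $A_{ij}$, $B_{ij}$ that the paper leaves implicit. One small slip is that for a triangle $\NC(P)\cong\NC(3)$ is not Boolean, but this is harmless: it visibly has a symmetric chain decomposition, and your inductive step already handles $n=3$ from the trivial cases $n\le 2$.
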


\begin{proof}
    Without loss of generality, suppose that $\shape(P)=[0,c_2,\hdots,c_k]$.
    We will proceed by induction on $n$, the number of points in $P$.
    First, consider the base case $n=3$. Then the elements of $P$ lie either
    on a common line segment, in which case $\NC(P) \cong \bool(2)$, or 
    on the vertices of a triangle, in which case $\NC(P) \cong \NC(3)$.
    In both cases, $\NC(P)$ has a symmetric chain decomposition by inspection.

    Now suppose that the theorem holds for all configurations with at most
    $n-1$ points and at least one blank side. By Lemmas~\ref{lem:interval-product} and \ref{lem:disjoint-union}, we know that $\NC(P)$ is the disjoint union of 
    $X$, which is isomorphic to $\NC(P-\{z_{1,0}\}) \times \bool(1)$, 
    and the intervals of the form $[\alpha_{ij},\beta_{ij}]$, each of which
    is isomorphic to $\NC(A_{ij}) \times \NC(B_{ij})$. Since $P - \{z_{1,0}\}$,
    $A_{ij}$, and $B_{ij}$ are configurations with at least one blank side 
    and strictly fewer points than $P$, we know that their associated lattices
    of noncrossing partitions admit symmetric chain decompositions by 
    the inductive hypothesis. Since the property of having a symmetric 
    chain decomposition is preserved under direct products of posets, we
    can therefore say that $\NC(P)$ is the disjoint union of centered 
    subposets, each of which admits a symmetric chain decomposition,
    and it follows that $\NC(P)$ has one as well. By induction, the
    claim holds for all hull configurations $P$ with at least one
    blank side.
\end{proof}

\section{Maximal Boolean subposets}
\label{sec:max-boolean}

In this section, we show that a correspondence between $\NC(n)$ and 
noncrossing trees can be generalized to $\NC(P)$, and then
leverage this to prove Theorem~\ref{mainthm:boolean}.

\begin{defn}
    Let $P \in \CC(n)$. A \emph{noncrossing forest} on
    $P$ is an embedded graph with no cycles in $\mathbb{C}$ with vertex set $P$
    and geodesic edges. Note that a path between two 
    configurations in the same convexity class induces an isomorphism 
    of embedded graphs on the two vertex sets, so we are able to 
    work with $P$ as a convexity class rather than a specific 
    configuration. 
    If $\mu_1$ and $\mu_2$ are
    noncrossing forests with the same vertex set 
    such that $\mu_1$ is a subgraph of $\mu_2$,
    then we write $\mu_1 \subseteq \mu_2$. A \emph{noncrossing tree}
    is a connected noncrossing forest. Finally, if $\mu$ is a noncrossing
    forest on the vertex set $P$, define $\partition(\mu)$ to be the
    partition of $P$ in which distinct elements $x$ and $y$ share a 
    block whenever they belong to the same connected component of $\mu$.
\end{defn}

If $M$ is a maximal element of $\HC(n)$, then $\NC(M) \cong \NC(n)$ 
and the noncrossing trees on $M$ are enumerated by the 
Fuss--Catalan number $C_{n-1}^{(3)} = \frac{1}{2n-1}\binom{3n-3}{n-1}$.
Moreover, it was previously shown that there is a bijection between
these trees and the subposets of $\NC(n)$ which are isomorphic to 
$\bool(n-1)$, the Boolean lattice of height $n-1$.

\begin{thm}[\protect{\cite[Proposition~4.4]{hks16} and \cite[Proposition~5.6]{hypertrees}}]
    \label{thm:classical-boolean-bijection}
    Let $M$ be a maximal element of $\HC(n)$. 
    Then there is a one-to-one correspondence between
    the maximal Boolean subposets of $\NC(M)$ and 
    the noncrossing trees on $M$.
\end{thm}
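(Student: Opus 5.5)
Since $M$ is a maximal element of $\HC(n)$, the points of $M$ are in convex position and $\NC(M)\cong\NC(n)$. The statement is therefore the classical correspondence of \cite{hks16,hypertrees}. A proof in the present language builds the bijection directly. To a noncrossing tree $\tau$ on $M$ with edge set $E$ (so $|E|=n-1$), associate the map
\[
\Phi_\tau \colon \bool(E) \to \NC(M), \qquad S \mapsto \partition(\tau_S),
\]
where $\tau_S$ is the spanning forest of $\tau$ with edge set $S$. The image is noncrossing: distinct components of a noncrossing forest on points in convex position have disjoint convex hulls, because two crossing hulls would force two crossing edges. The map is order-preserving. It is injective, since in a forest distinct edge subsets give distinct connected-component partitions; removing an edge of $S$ always disconnects its endpoints inside $\tau_S$. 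It also reflects order: $\partition(\tau_S)\le\partition(\tau_T)$ forces $S\subseteq T$, because an edge $e\in S\setminus T$ joins vertices lying in different components of $\tau_T$. Hence the image $B_\tau$ is a Boolean subposet of height $n-1$. It is maximal, since $\NC(n)$ has height $n-1$ and any Boolean subposet containing $B_\tau$ would have to be strictly taller.

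Conversely, let $B\subseteq\NC(M)$ be a maximal Boolean subposet. I first argue that $B$ has height exactly $n-1$, and more precisely that $B$ contains $\hat0$ and $\hat1$. The tool is a parallel-translation argument: a Boolean subposet missing $\hat 0$ can be enlarged by adjoining its meet and a suitable chain, so maximality forces it to be as large as possible. This step needs care, and I would verify it first in small cases. The atoms of $B$ are then $n-1$ atoms of $\NC(M)$, that is, $n-1$ edges $\{x,y\}$. Let $\tau$ be the graph they form. The join of all atoms is $\hat1$, so $\tau$ is connected, and having $n-1$ edges it is a tree. Each join of two atoms $\{a,b\},\{c,d\}$ must have rank $2$ in a Boolean poset. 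If those two edges crossed, their join in $\NC(M)$ would merge four points into one block, of rank $3$; so $\tau$ is noncrossing. More generally, the element of $B$ above a set $S$ of atoms is $\bigvee S=\partition(\tau_S)$, which gives $B=B_\tau$.

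The main obstacle is this second direction: showing that every maximal Boolean subposet is generated by atoms of $\NC(M)$ and is closed under the joins computed in $\NC(M)$. A priori a Boolean subposet's bottom need not be $\hat0$, and the join computed inside $B$ might differ from the lattice join. I would handle this by checking that the rank function of $\NC(M)$ restricts to $B$ as the Boolean rank, which follows once height $n-1$ is established. Then each element of $B$ of rank $r$ has exactly $r$ atoms of $B$ below it, and this pins it down as the corresponding forest partition.
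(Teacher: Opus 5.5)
Your forward direction ($\tau\mapsto B_\tau$) is sound and matches the argument the paper gives for its generalization, Theorem~\ref{thm:boolean-bijection}; the paper cites rather than proves this classical statement. The converse, however, rests on a claim that is false as you set it up. You read ``maximal'' as inclusion-maximal and plan to show that every inclusion-maximal Boolean subposet of $\NC(M)$ contains $\hat0$ and $\hat1$ and has height $n-1$. This already fails for $n=4$. Take $M=\{1,2,3,4\}$ in cyclic order. The atoms with non-singleton blocks $\{1,3\}$ and $\{2,4\}$ are incomparable, with meet $\hat0$ and join $\hat1$, so $\{\hat0,\{1,3\},\{2,4\},\hat1\}$ is a copy of $\bool(2)$ (even a sublattice). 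It lies in no copy of $\bool(3)$. Such a copy would be rank-preserving, so both atoms would be atoms of it, and its rank-$2$ element above them would lie above their join $\hat1$, which has rank $3$. So this subposet is inclusion-maximal and corresponds to no noncrossing tree. It also has height $2$ despite containing $\hat0$ and $\hat1$. Under your reading the statement itself is false, and no ``parallel-translation'' argument can rescue it. The intended meaning, as stated in the paper's proof of Theorem~\ref{thm:boolean-bijection}, is a \emph{copy of $\bool(n-1)$}. With that definition your ``main obstacle'' takes one line. Each element of such a copy $B$ lies on an $n$-element chain in $B$. In the graded lattice $\NC(M)$ of height $n-1$, such a chain must pass through every rank from $\hat0$ to $\hat1$. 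Hence the Boolean rank on $B$ agrees with the rank in $\NC(M)$.

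There is a second, smaller gap. The step ``the join of all atoms is $\hat1$, so $\tau$ is connected'' does not follow for joins computed in $\NC(M)$. In $\NC(5)$ the edges $\{1,3\},\{3,4\},\{1,4\},\{2,5\}$ have join $\hat1$ because the blocks $\{1,3,4\}$ and $\{2,5\}$ cross, yet the graph is disconnected. Prove acyclicity instead. Write $b_S\in B$ for the element above a set $S$ of atoms of $B$, so $\bigvee S\le b_S$. Suppose the edges contained a cycle $C$, and let $e\in C$. The path $C\setminus\{e\}$ joins the endpoints of $e$, so $e\le\bigvee(C\setminus\{e\})\le b_{C\setminus\{e\}}$ even though $e\notin C\setminus\{e\}$. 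This is impossible in a Boolean lattice, since $B$ carries the induced order. Hence the $n-1$ edges form a forest on $n$ vertices, i.e.\ a spanning tree $\tau$. Your crossing argument then makes $\tau$ noncrossing. For every $S$, $\bigvee S=\partition(\tau_S)$ has rank $|S|=\rk(b_S)$, which forces $b_S=\partition(\tau_S)$ and $B=B_\tau$. The paper's proof of Theorem~\ref{thm:boolean-bijection} words its connectivity step the same way you do, so the same repair applies there.
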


The correspondence above can be extended to the
more general case of $\NC(P)$, but we first need to define a special 
type of noncrossing tree.

\begin{defn}
    \label{def:convex-geodesics}
    Let $P \in \HC(n)$. We say that a noncrossing forest $\mu$ on 
    $P$ has \emph{convex geodesics} if for each 
    subtree $\tau \subseteq \mu$, the only elements of $P$ in
    $\conv(\tau)$ are the vertices of $\tau$. Equivalently, for each
    pair of distinct vertices $x,y \in P$ which belong to the same
    connected component of $\mu$, the unique path from 
    $x$ to $y$ in $\mu$ has a convex hull which includes only the elements of 
    $P$ which lie along the path.
\end{defn}

While we will not need it in this article, it can also be shown that a
noncrossing tree $\tau$ on $P$ has convex geodesics if and only all
leaves of $\tau$ (i.e. the vertices of degree 1) lie on corners of
$\conv(P)$.

\begin{lem}
    \label{lem:convex-geodesics-implies-noncrossing}
    If $\mu$ is a noncrossing forest on the vertex set $P$
    with convex geodesics, then $\partition(\mu)$ is a noncrossing
    partition of $P$.
\end{lem}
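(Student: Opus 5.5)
Write $\pi = \partition(\mu)$. We must show that for two distinct components $\tau_1,\tau_2$ of $\mu$, with vertex sets the blocks $V_1,V_2$, the convex hulls $\conv(V_1)$ and $\conv(V_2)$ are disjoint. Note that $\conv(V_i)=\conv(\tau_i)$, since the edges are straight segments between vertices.

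The plan is to argue by contradiction. Suppose the two hulls meet. There are two ways this can happen, and we treat them separately:
\begin{enumerate}
\item[(a)] some point of one block lies in the hull of the other block;
\item[(b)] no point of either block lies in the other's hull, but the two hulls still intersect.
\end{enumerate}

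Case (a) is ruled out immediately by the convex geodesics hypothesis of Definition~\ref{def:convex-geodesics}. Applied to the subtree $\tau_1\subseteq\mu$, it says that the only elements of $P$ in $\conv(\tau_1)$ are the vertices of $\tau_1$. Hence no vertex of $\tau_2$ lies in $\conv(V_1)$, and symmetrically no vertex of $\tau_1$ lies in $\conv(V_2)$.

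For case (b) we use a standard planar fact: two convex polygons (possibly degenerate, i.e.\ segments or points) that intersect with neither containing a vertex of the other must have crossing boundaries. So some edge $[a,b]$ of $\conv(V_1)$ crosses some edge $[c,d]$ of $\conv(V_2)$ at a point interior to both. We then need to upgrade this hull-edge crossing to a crossing of actual tree edges, which would contradict that $\mu$ is noncrossing.

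The tool is the path in $\tau_1$ from $a$ to $b$ and the path in $\tau_2$ from $c$ to $d$. Since $P$ is a hull configuration, every point of $P$ lies on the boundary of $\conv(P)$ (or all of $P$ lies on a segment). Consider the segment $[c,d]$. Because its endpoints are boundary points of $\conv(P)$, it is a chord of $\conv(P)$. Its interior separates $\conv(P)$ into two sides, unless $[c,d]$ lies on a side of $\conv(P)$.
\begin{itemize}
\item[(i)] If $[c,d]$ is a genuine chord, then $a$ and $b$ lie strictly on opposite sides of it, because $[a,b]$ crosses it transversally at an interior point. The path from $a$ to $b$ in $\tau_1$ is a connected polygonal curve inside $\conv(P)$ joining the two sides. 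It therefore meets the line through $c,d$ at a point of $\conv(P)$ on that line, which lies in $[c,d]$ (as the chord $[c,d]$ is the intersection of that line with $\conv(P)$, up to the degenerate collinear situation handled separately). This gives a point of $\tau_1$ on the segment $[c,d]\subseteq\conv(\tau_2)$. By the same argument with the roles swapped, we get a meeting point between the two trees. If that point is a vertex, we contradict case (a). If it is interior to edges of both trees, we contradict the noncrossing property.
\item[(ii)] If $[c,d]$ lies on a side of $\conv(P)$, then the interior of $[a,b]$ cannot cross it transversally. So the crossing must be collinear overlap, and then some endpoint of one segment lies in the other, reducing again to case (a).
\end{itemize}
The one-dimensional case, where $P$ lies on a segment, is the simplest instance: all hulls are intervals, and overlapping intervals force a point of one block inside the other's interval, which is case (a).

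The main obstacle is the careful planar topology in case (b)(i). The concern is making sure that where the path crosses the line through $c$ and $d$, it actually lands in the segment $[c,d]$, and does not, say, graze a side of $\conv(P)$ or pass through a collinear point in a way that is neither a vertex of $\tau_2$ nor an edge crossing.

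I would handle this by working inside $\conv(P)$ throughout, using that all points of $P$ are on $\partial\conv(P)$. An intersection point of a tree edge with $[c,d]$ is then either an element of $P$ or a point in the relative interior of $[c,d]$. In the latter case, it lies in $\conv(\tau_2)$, and we compare directly with the edges of the path from $c$ to $d$, whose union together with $[c,d]$ bounds a region. A Jordan-curve argument on that region forces $\tau_1$'s path to cross $\tau_2$'s path.
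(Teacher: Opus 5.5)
Your case split is sound, and its first half is exactly the paper's argument. Convex geodesics applied to each component gives $\conv(V_i)\cap P=V_i$, which disposes of case (a). The paper then simply asserts that the hulls cannot otherwise meet because the trees do not overlap, so your case (b) is trying to justify something the paper takes for granted. The pieces you do carry out are fine: the planar fact, the reduction of (b)(ii) to (a), the one-dimensional case, and the observation that in (b)(i) the line through $c,d$ meets $\conv(P)$ exactly in $[c,d]$.

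The gap is the sentence ``by the same argument with the roles swapped, we get a meeting point between the two trees.'' What that argument yields is a point of $\tau_1$ on $[c,d]$ and, after swapping, a point of $\tau_2$ on $[a,b]$. Since $[a,b]$ and $[c,d]$ are hull edges rather than tree edges, neither is a common point of $\tau_1$ and $\tau_2$, so no contradiction with noncrossing follows. Your closing Jordan-curve paragraph is the right repair, but it omits the fact that makes it work. A path entering the region bounded by $J=\gamma_2\cup[c,d]$ through $(c,d)$ is forced to cross $\gamma_2$ only if it must end outside that region, and you never show that $b$ is outside.

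Here is how to close it. If $[c,d]$ is itself an edge of $\tau_2$, the point of $\tau_1$ on $(c,d)$ that you found is already an interior crossing of two tree edges. Otherwise $J$ is a simple closed polygon with vertices in $V_2$, so its bounded complementary region lies in $\conv(V_2)$. Case (b) gives $a,b\notin\conv(V_2)$, so both endpoints of $\gamma_1$ lie outside $J$, and hence $\gamma_1$ crosses $J$ an even number of times.

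On the other hand, in case (i) the open chord $(c,d)$ lies in the interior of $\conv(P)$, while $P\subseteq\partial\conv(P)$, so $P\cap[c,d]=\{c,d\}\subseteq V_2$. Thus no vertex of $\gamma_1$ lies on the line through $c,d$, and no edge of $\tau_1$ is collinear with it. So $\gamma_1$ meets that line only transversally, an odd number of times (since $a,b$ are strictly on opposite sides), and always inside the open segment $(c,d)$. It cannot meet the line at $c$ or $d$, because these are not in $\conv(V_1)$. If $\gamma_1\cap\gamma_2=\emptyset$, these are all of its crossings with $J$, which is a parity contradiction. So $\gamma_1$ meets $\gamma_2$. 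Any common point is then either a vertex of one tree lying in the other's hull, excluded by (a), or an interior crossing of edges, excluded since $\mu$ is noncrossing.

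A quicker equivalent: in case (i) the points $a,c,b,d$ alternate around $\partial\conv(P)$, and two paths in the closed disk $\conv(P)$ joining alternating boundary points must intersect.
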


\begin{proof}
    Each pair of distinct blocks $A_1$ and $A_2$ in $\partition(\mu)$
    can be viewed as the vertex sets for distinct subtrees $\tau_1$ and 
    $\tau_2$ in $\mu$. Since $\mu$ has convex geodesics, we know that
    $\conv(A_i) \cap P = \conv(\tau_i) \cap P = A_i$, so the convex hull
    of each $A_i$ does not contain any additional points of $P$. 
    Moreover, the convex hulls of $A_1$ and $A_2$ do not have intersecting
    interiors since $\tau_1$ and $\tau_2$ do not overlap because
    $\mu$ is noncrossing. Therefore, $\conv(A_1)$ and $\conv(A_2)$ are
    disjoint, so $\partition(\mu)$ is noncrossing.
\end{proof}

\begin{lem}
    \label{lem:rank-convexity}
    Let $\pi_1,\ldots,\pi_k$ be atoms in $\NC(P)$ such that the corresponding
    edges $e_1,\ldots,e_k$ form a tree with vertex set $Q \subseteq P$. 
    Then $\rk(\pi_1\vee \cdots \vee \pi_k) = k$ if and only if 
    $\conv(e_1,\ldots,e_k) \cap P = Q$. 
\end{lem}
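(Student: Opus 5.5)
The plan is to identify the join $\pi_1\vee\cdots\vee\pi_k$ explicitly and compare its rank with the number of edges. Since the edges $e_1,\ldots,e_k$ form a tree on $Q$, we have $|Q| = k+1$. Throughout, I will use that $\NC(P)$ is graded with $\rk(\pi) = n - (\text{number of blocks})$, valid for hull configurations by \cite[Theorem~2.3]{dfjllpr25}. The join of atoms is the finest noncrossing partition in which every $e_i$ lies inside a block.

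\textbf{Step 1: the join contains $Q$ in one block.} Since $Q$ is connected through the edges, any partition above every $\pi_i$ must put all of $Q$ in a single block. So $\sigma := \pi_1\vee\cdots\vee\pi_k \geq \rho_Q$, where $\rho_Q$ is the partition with one block $Q$ and singletons elsewhere. Note that $\rho_Q$ has $n-k$ blocks, so it has rank $k$ if it is noncrossing.

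\textbf{Step 2 (``if'').} Suppose $\conv(e_1,\ldots,e_k)\cap P = Q$. Observe that $\conv(e_1,\ldots,e_k) = \conv(Q)$. Then the only points of $P$ in $\conv(Q)$ are those of $Q$, and the remaining blocks are singletons. A singleton $\{p\}$ with $p \notin \conv(Q)$ has convex hull disjoint from $\conv(Q)$. Hence $\rho_Q$ is noncrossing, so $\sigma = \rho_Q$, and $\rk(\sigma) = n-(n-k) = k$.

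\textbf{Step 3 (``only if'').} Suppose instead that some $p \in P\setminus Q$ lies in $\conv(Q)$. Then in any noncrossing partition above $\rho_Q$, the point $p$ must share a block with $Q$: otherwise its block's hull contains $p \in \conv(Q)$ and so meets the hull of the block containing $Q$. Hence the block of $\sigma$ containing $Q$ contains at least $k+2$ points. So $\sigma$ has at most $n-k-1$ blocks and $\rk(\sigma)\geq k+1\neq k$.

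The main subtlety is the gradedness and rank formula, which I am importing from the cited result, and checking that disjointness of convex hulls is exactly the noncrossing condition used in the paper. Both are direct from the definitions, so I expect no serious obstacle.
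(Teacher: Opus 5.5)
Your proposal is correct and is essentially the paper's argument. Both observe that the join must have $\conv(e_1,\ldots,e_k)\cap P=\conv(Q)\cap P$ as its unique non-singleton block, and then compare its size with $|Q|=k+1$ via $\rk = n-(\text{number of blocks})$; you check the two halves separately (noncrossingness of $\rho_Q$, and absorption of any extra point $p$), and your inequality $\rk(\sigma)\geq k+1$ points the right way, whereas the paper's printed ``$\rk(\pi)\leq k$'' should read $\rk(\pi)\geq k$.
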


\begin{proof}
    Let $\pi \in \NC(P)$ denote the join $\pi_1\vee \cdots \vee \pi_k$
    and note that since $e_1,\ldots,e_k$ form a connected graph,
    $\pi$ is the partition of $P$ which has 
    $\conv(e_1,\ldots,e_k) \cap P$ as its unique non-singleton block. 
    Since $Q\subseteq \conv(e_1,\ldots,e_k) \cap P$
    and $|Q| = k+1$ (because $e_1,\ldots,e_k$ form a tree),
    we see that $\rk(\pi) \leq k$, with equality precisely when 
    $\conv(e_1,\ldots,e_k) \cap P = Q$.
\end{proof}

We now state and prove a generalization of 
Theorem~\ref{thm:classical-boolean-bijection}, which forms the 
first part of Theorem~\ref{mainthm:boolean}.

\begin{thm}
    \label{thm:boolean-bijection}
    Let $P\in \HC(n)$. Then there is a 
    one-to-one correspondence between the maximal Boolean 
    subposets of $\NC(P)$ and the noncrossing trees on $P$ with convex geodesics.
\end{thm}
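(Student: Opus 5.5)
The plan is to build an explicit map from noncrossing trees on $P$ with convex geodesics to Boolean subposets of $\NC(P)$, and then to show it is a bijection onto the maximal ones.

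\textbf{Forward map.} Given a noncrossing tree $\tau$ on $P$ with convex geodesics, I will send it to
\[ B(\tau) = \{\partition(\mu) : \mu \subseteq \tau \text{ a spanning subforest}\}. \]
Every subforest of $\tau$ again has convex geodesics, because its subtrees are subtrees of $\tau$. So Lemma~\ref{lem:convex-geodesics-implies-noncrossing} puts $B(\tau)$ inside $\NC(P)$.

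\textbf{$B(\tau)$ is Boolean of height $n-1$.} The map from edge subsets $S$ of $\tau$ to $\partition(S)$ is injective, since distinct edge sets of a forest give distinct component partitions. It is also order preserving and reflecting: if $\partition(S)\le\partition(S')$ and $e\in S$, then the endpoints of $e$ lie in one component of $S'$. The unique path in $\tau$ joining them is $e$ itself, so $e\in S'$. Thus $B(\tau)\cong\bool(n-1)$. By Lemma~\ref{lem:rank-convexity}, the rank of each element equals its number of edges, so $B(\tau)$ is in fact a rank-preserving copy.

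\textbf{Maximality.} The height of $\NC(P)$ is $n-1$ because $\NC(P)$ is graded \cite{dfjllpr25}. A Boolean subposet of height $n-1$ is therefore maximal: anything properly containing it would be a Boolean lattice of larger height, or of the same height with more elements, which is impossible.

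\textbf{Injectivity.} The atoms of $B(\tau)$ are the single-edge partitions, and these recover the edges of $\tau$.

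\textbf{Surjectivity (main obstacle).} Let $\mathcal{B}$ be a maximal Boolean subposet of $\NC(P)$. The task is to show that $\mathcal{B}=B(\tau)$ for some tree $\tau$ with convex geodesics.

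The route I would take passes through Proposition~\ref{prop:nc-subposets}. It embeds $\NC(P)$ into $\NC(M)\cong\NC(n)$, where $M$ is a maximal hull configuration above $P$. This embedding is the identity on set partitions, and I would check that it preserves rank and covering relations.

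Step 1: show that $\mathcal{B}$ has height $n-1$, with $\hat0$ and $\hat1$ as its bottom and top. First, any Boolean subposet extends to one of full height; I would argue this by adjoining chains and using the lattice property, though it is the delicate point. Given full height, $\mathcal{B}$ sits in $\NC(M)$ as a Boolean subposet of height $n-1$. Theorem~\ref{thm:classical-boolean-bijection} (more precisely, its proof in \cite{hks16,hypertrees}) then gives $\mathcal{B}=B(\tau)$ for a noncrossing tree $\tau$ on $M$. Its edges are read off from the $n-1$ atoms of $\mathcal{B}$, which are atoms of $\NC(P)$ and hence correspond to pairs of points.

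Step 2: show that $\tau$, viewed on $P$, has convex geodesics. Every subtree $\sigma$ with $k$ edges gives the element $\partition(\sigma)\in\mathcal{B}$. This element lies in $\NC(P)$ and has rank $k$, and its non-singleton block is exactly the join of the corresponding atoms. Lemma~\ref{lem:rank-convexity} then forces $\conv(\sigma)\cap P$ to equal the vertex set of $\sigma$. The tree is also noncrossing on $P$, since its edges are noncrossing in $M$ and the collapse to $P$ creates no interior overlaps; any overlap would produce a crossing partition.

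\textbf{Expected difficulty.} I expect the hardest step to be the reduction in Step 1: showing that a maximal Boolean subposet of $\NC(P)$ must have full height $n-1$ and lie between $\hat0$ and $\hat1$. If that fails directly, I would instead prove the second half of Theorem~\ref{mainthm:boolean} first, that $\NC(P)$ is a union of the posets $B(\tau)$, and then combine it with the lattice structure to rule out smaller maximal Boolean subposets.
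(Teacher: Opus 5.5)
\textbf{The gap is in Step~1, and the claim there is false, not merely delicate.} You propose to show that every Boolean subposet of $\NC(P)$ extends to one of full height $n-1$. This already fails in the classical case.

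Take $M\in\HC(4)$ maximal, with points labelled $1,2,3,4$ cyclically. Let $\pi_{13},\pi_{24}$ be the atoms with non-singleton blocks $\{1,3\}$ and $\{2,4\}$. Then $\{\hat{0},\pi_{13},\pi_{24},\hat{1}\}$ is an induced copy of $\bool(2)$ in $\NC(M)\cong\NC(4)$. It is even a sublattice, since $\pi_{13}\vee\pi_{24}=\hat{1}$ and $\pi_{13}\wedge\pi_{24}=\hat{0}$.

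Suppose a Boolean subposet properly contained it. It would have at least $8$ elements, so it would be a copy of $\bool(3)$ running from $\hat{0}$ to $\hat{1}$, with levels equal to ranks. Then $\pi_{13}$ and $\pi_{24}$ would be two of its atoms and would need a common upper bound of rank $2$. The only candidate is the partition with blocks $\{1,3\}$ and $\{2,4\}$, which is crossing.

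So this is an inclusion-maximal Boolean subposet of height $2$ that is $B(\tau)$ for no tree. Your fallback (prove the union theorem first, then use the lattice structure to exclude smaller maximal ones) fails for the same reason, because $\NC(4)$ is a union of the $B(\tau)$ and still contains this example. The statement is true only when ``maximal Boolean subposet'' means a copy of $\bool(n-1)$. That is how the paper uses the phrase (``maximal Boolean subposet (i.e.\ a copy of $\bool(n-1)$)'').

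\textbf{With that reading, Step~1 is immediate.} Since $\NC(P)$ is graded of height $n-1$, a chain of length $n-1$ in $\mathcal{B}$ must run from $\hat{0}$ to $\hat{1}$ and be saturated. Hence Boolean levels agree with ranks in $\NC(P)$, and the atoms of $\mathcal{B}$ are atoms of $\NC(P)$. Your forward direction and Step~2 then go through, and they match the paper's argument.

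The detour through $\NC(M)$ and the classical theorem is unnecessary. The paper reads $\tau$ directly off the $n-1$ atoms of $\mathcal{B}$:
\begin{itemize}
\item their join is $\hat{1}$, so the edges form a connected graph on $n$ vertices with $n-1$ edges, hence a tree;
\item joins of pairs of atoms have rank $2$, which forces the tree to be noncrossing;
\item joins of $k$ atoms have rank $k$, which together with Lemma~\ref{lem:rank-convexity} gives convex geodesics.
\end{itemize}
What your detour does buy is the equality $\mathcal{B}=B(\tau)$ for free; the paper asserts this without comment. Done directly, you would note that each element of $\mathcal{B}$ is the $\mathcal{B}$-join of the atoms below it. That element lies above $\partition$ of the corresponding subforest (which is in $\NC(P)$ by convex geodesics) and has the same rank, so the two are equal.
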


\begin{proof}
    First, let $\tau$ be a noncrossing tree on $P$ with convex geodesics
    and recall that each subforest $\mu \subseteq \tau$ determines a noncrossing
    partition $\partition(\mu)$ of $P$ by 
    Lemma~\ref{lem:convex-geodesics-implies-noncrossing}.
    It is straightforward
    to see that the $2^{n-1}$ subforests of $\tau$ produce distinct
    partitions, and $\partition(\mu_1) \subseteq \partition(\mu_2)$ in $\NC(P)$ 
    if and only if $\mu_1 \subseteq \mu_2$, so it follows that each noncrossing
    tree $\tau$ with convex geodesics on $P$ determines a maximal Boolean subposet 
    (i.e. a copy of $\bool(n-1)$) in $\NC(P)$---call this subposet 
    $\bool(\tau)$. Since the atoms of $\bool(\tau)$ correspond to the edges of
    $\tau$, we can then see that the map $\tau \mapsto \bool(\tau)$ is
    an injection from the set of noncrossing trees on $P$ with convex geodesics
    to the set of maximal Boolean subposets of $\NC(P)$.

    Now, let $\mathcal{B}$ be a maximal Boolean subposet of $\NC(P)$, i.e. 
    $\mathcal{B} \cong \bool(n-1)$. Then the maximum element of $\NC(P)$,
    which consists of the single block $P$, is the join of the $n-1$ atoms of
    $\mathcal{B}$, and it follows that the edges corresponding to these atoms
    form a connected graph with vertex set $P$. Since there are $n-1$ edges
    and $|P| = n$, we know by the Euler characteristic that this connected 
    graph is a tree, and since the join of any two atoms has rank 2 in 
    $\bool(n-1)$, this tree must be noncrossing---call it $\tau$. 
    In a Boolean lattice, the join of any $k$ atoms has rank $k$, so 
    by Lemma~\ref{lem:rank-convexity}, every subtree of $\tau$
    has a convex hull which intersects $P$ in its vertex set. In other words,
    $\tau$ has convex geodesics. It follows that $\mathcal{B} = \bool(\tau)$
    and the proof is complete.
\end{proof}

In addition to Theorem~\ref{thm:classical-boolean-bijection}, it
was shown in \cite{hks16} and \cite{hypertrees} that $\NC(n)$ is
the union of its maximal Boolean subposets. To close this section,
we prove the analogous result for $\NC(P)$ when $P$ is a hull configuration.
First, we will need a way of perturbing noncrossing trees by ``sliding''
edges.

\begin{defn}
    \label{def:slide}
    Let $P \in \HC(n)$ and let $\tau$ be a noncrossing tree on $P$ with convex geodesics. 
    Suppose that $p$, $q$ and $r$
    are vertices of $\tau$ and $e = \{p,q\}$, $f = \{q,r\}$
    are edges of $\tau$ which are adjacent in the cyclic order
    of edges incident to $q$. If we replace $e$ with a new edge $e' = \{p,r\}$,
    then the resulting graph $\tau'$ is still a tree on $P$, which 
    we say was obtained by \emph{sliding $e$ along $f$}. See
    Figure~\ref{fig:tree-slide} for an illustration. 
\end{defn}

\begin{lem}
    \label{lem:slide}
    Let $P \in \HC(n)$, let $\tau$ be a noncrossing tree on $P$ with convex geodesics,
    and suppose that $\tau'$ is another noncrossing tree on $P$ with convex geodesics
    which is the result of sliding the edge $e = \{p,q\}$ along $f = \{q,r\}$. 
    If $\pi \in \bool(\tau)$, then $\pi$ is also an element of $\bool(\tau')$ 
    if and only if the block containing $p$ either contains both $q$ and $r$
    or contains neither $q$ nor $r$.
\end{lem}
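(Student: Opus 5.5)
Every element of $\bool(\tau)$ is $\partition(\mu)$ for a unique subforest $\mu\subseteq\tau$; this is the content of the first half of the proof of Theorem~\ref{thm:boolean-bijection}, which says that distinct subforests give distinct partitions. So fix $\pi=\partition(\mu)$ with $\mu\subseteq\tau$. The edges of $\tau$ and $\tau'$ coincide except that $e=\{p,q\}$ is replaced by $e'=\{p,r\}$, and $f=\{q,r\}$ is an edge of both trees. The strategy is to show that $\pi\in\bool(\tau')$ holds exactly when some subforest $\mu'\subseteq\tau'$ has the same connected-component partition as $\mu$. I then decide when such a $\mu'$ exists by splitting into four cases, according to whether $e\in\mu$ and whether $f\in\mu$.

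\emph{Case $e\notin\mu$.} Then $\mu$ is also a subforest of $\tau'$, so $\pi\in\bool(\tau')$. I need to check that the block condition holds here, and for this I use the structure of $\tau$. Removing $e$ from $\tau$ separates $p$ from $q$. Since $f$ is an edge at $q$ and $r\neq p$, the vertex $r$ lies on $q$'s side of that separation. Therefore, in $\mu\subseteq\tau-e$, neither $q$ nor $r$ is joined to $p$, so the block of $p$ contains neither of them.

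\emph{Case $e\in\mu$ and $f\in\mu$.} Here I take $\mu'=(\mu-e)\cup e'$. The edge $f$ keeps $q$ and $r$ connected, so swapping $e$ for $e'$ leaves every component's vertex set unchanged. Hence $\partition(\mu')=\pi$, and $\pi\in\bool(\tau')$. The block of $p$ contains both $q$ and $r$, so the condition holds.

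\emph{Case $e\in\mu$ and $f\notin\mu$.} Now the block of $p$ contains $q$. It cannot contain $r$: the only path from $q$ to $r$ in $\tau$ is the edge $f$, and $f\notin\mu$. So the block condition fails, and I must show $\pi\notin\bool(\tau')$. Suppose instead that $\pi=\partition(\mu')$ for some $\mu'\subseteq\tau'$. Since $p$ and $q$ share a block of $\pi$, the path from $p$ to $q$ in $\tau'$ must lie in $\mu'$. In $\tau'$ the vertices $p$ and $q$ are no longer adjacent, and the unique path between them is $p,r,q$, using $e'$ and then $f$. This path passes through $r$, which would put $r$ in the block of $p$. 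That contradicts what we showed about $\pi$.

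This last case is the one real point, and it is where I expect to need care. The argument relies on the fact that blocks of $\partition(\mu')$ are exactly the vertex sets of connected components of $\mu'$, so any two vertices in one block are joined by a path inside $\mu'$. It also relies on paths in a tree being unique. The adjacency of $e$ and $f$ in the cyclic order at $q$, and the convex-geodesic hypotheses on $\tau$ and $\tau'$, are needed only so that $\bool(\tau')$ is defined (via Lemma~\ref{lem:convex-geodesics-implies-noncrossing}); they play no role in the combinatorial argument itself.

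Combining the three cases, $\pi\in\bool(\tau')$ holds precisely when the block of $p$ contains both $q$ and $r$ or neither of them, which is the statement of the lemma.
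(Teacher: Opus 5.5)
Your proof is correct and follows essentially the same route as the paper: you split on whether $e$ lies in the subforest (the paper's condition $\pi_{pq}\in A$) and then on whether $r$ lies in the block of $p$, and the key step shows that if $e$ is used and $\pi\in\bool(\tau')$, then $r$ is forced into the block of $p$. The paper argues this with atoms and joins in $\bool(\tau)$ and $\bool(\tau')$, using uniqueness of atom representations. You argue it with subforests and uniqueness of paths in a tree, which also makes explicit why $r$ can never share $p$'s block without $q$, a point the paper leaves implicit.
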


\begin{figure}
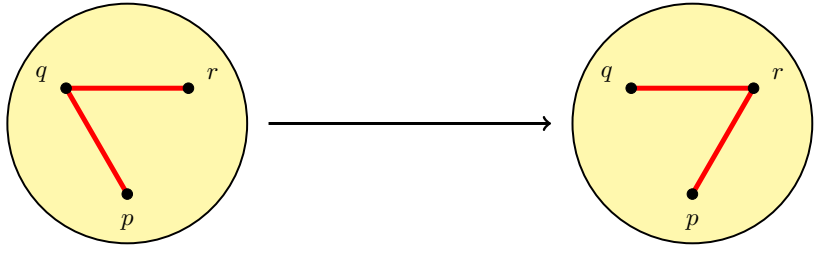

    \centering
    \includestandalone[width=0.9\textwidth]{fig-tree-slide}
    \caption{On the left, a tree on the vertex set $\{p,q,r\}$;
    on the right, the result of sliding the edge $\{p,q\}$
    along the edge $\{q,r\}$.}
    \label{fig:tree-slide}
\end{figure}

\begin{proof}
    Suppose $\pi \in \bool(\tau)$ and let $A$ be the set of atoms 
    in $\bool(\tau)$ below $\pi$, i.e. the join of all elements in $A$
    is $\bigvee A = \pi$. Also, let $\pi_{pq}$ and $\pi_{pr}$ be the atoms 
    of $\NC(P)$ with unique non-singleton block $\{p,q\}$ and $\{p,r\}$, respectively. 
    If $\pi_{pq} \not\in A$, then $\pi$ remains
    in $\bool(\tau')$ since all elements of $A$ remain in the new 
    poset. Note that this corresponds to the case where $p$ belongs to a distinct
    block from the block (or blocks) containing $q$ and $r$, since 
    $\pi_{pq} \not\leq \pi$ and $\pi_{pr} \not\leq \pi$. If on the other hand
    $\pi_{pq} \in A$ and $\pi \in \bool(\tau')$, then it must be that $\pi_{pr} \leq \pi$, 
    else $\pi$ could be represented as the join of elements in $A-\{\pi_{pq}\}$, 
    which contradicts our assumption. Conversely, if $\pi_{pq} \leq \pi$
    and $\pi_{pr} \leq \pi$, then the join of all elements in $(A-\{\pi_{pq}\})\cup \{\pi_{pr}\}$
    must be equal to $\pi$, meaning that $\pi \in \bool(\tau')$. Since this 
    corresponds to the case where $p$, $q$ and $r$ share a block in $\pi$, 
    the proof is complete.
\end{proof}

We are now ready to complete the proof of Theorem~\ref{mainthm:boolean}.

\begin{thm}
    \label{thm:boolean-union}
    For each $P\in \HC(n)$, the lattice $\NC(P)$ is a union of 
    maximal Boolean subposets.
\end{thm}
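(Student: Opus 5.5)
Given $\pi \in \NC(P)$, the plan is to build a noncrossing tree $\tau$ on $P$ with convex geodesics such that $\pi = \partition(\mu)$ for some subforest $\mu \subseteq \tau$. Then $\pi \in \bool(\tau)$, and by Theorem~\ref{thm:boolean-bijection} $\bool(\tau)$ is a maximal Boolean subposet. The construction has two stages: first build a forest inside each block of $\pi$, then join the components with $|\pi|-1$ extra edges.

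\emph{Stage 1 (spanning each block).} Fix a block $A$ of $\pi$. Since $\pi$ is noncrossing, $\conv(A) \cap P = A$. Every point of $P$ lies on $\partial\conv(P)$ (or on a segment, in rank $2$), so the points of $A$ are in weakly convex position. I would take the path $\tau_A$ that visits the points of $A$ in their cyclic (or linear) order around $\partial \conv(A)$, omitting one hull edge. To keep convex geodesics, the omitted edge should be one whose endpoints are corners of $\conv(A)$, so that every leaf of $\tau_A$ is a corner. Geodesics in a convex-position path are contiguous arcs of the boundary order, and their convex hulls contain no further points of $A$, hence none of $P$. Different blocks have disjoint hulls, so $\mu = \bigcup_A \tau_A$ is a noncrossing forest with $\partition(\mu) = \pi$. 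It has convex geodesics because every path in $\mu$ lies inside a single block.

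\emph{Stage 2 (connecting blocks).} I would induct on the number of blocks. Suppose a forest $\mu$ with convex geodesics satisfies $\partition(\mu) = \pi$ and has at least two components. Walk counterclockwise around $\partial\conv(P)$. There must be two consecutive points $x, y$ of $P$ along the boundary that lie in different blocks, and the segment $[x,y]$ lies on $\partial\conv(P)$ and contains no other point of $P$. Adding the edge $\{x,y\}$ keeps the graph noncrossing, since a boundary segment crosses nothing. The real task is to choose $x,y$, and if necessary replace the edge by a slide (Definition~\ref{def:slide}), so that convex geodesics survive, i.e.\ every new geodesic through $\{x,y\}$ has a hull meeting $P$ only in path vertices.

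\emph{Main obstacle.} Preserving convex geodesics when components are joined is the hard step. The danger is that the hull of a path running through the new edge swallows points of other blocks that lie on the same side of $\conv(P)$. I would first try the criterion remarked after Definition~\ref{def:convex-geodesics}: a tree has convex geodesics if and only if all its leaves are corners of $\conv(P)$. This is plausible because geodesic hulls are then controlled by their endpoints. With that criterion, it suffices to connect components so that no interior collinear point becomes a leaf. Concretely, I would merge an outermost block along a boundary arc, attaching at an endpoint adjacent to a neighbouring block, and then adjust the attachment by slides, using Lemma~\ref{lem:slide} to check that $\pi$ stays in the Boolean subposet of the modified tree. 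If that criterion cannot be used, the fallback is to check the hull condition directly via Lemma~\ref{lem:rank-convexity}, by verifying that every subtree $\tau'$ has $\rk$ of its join equal to its number of edges.
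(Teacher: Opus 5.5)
Your proposal has a genuine gap. Stage~2, which you correctly flag as the hard step, is never carried out, and Stage~1 cannot be decoupled from it.

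Take the six-point configuration from the introduction: corners $v_1,v_2,v_3$ of a triangle and midpoints $m_{12},m_{23},m_{31}$.

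For $\pi=\hat{1}$ your Stage~1 rule cannot be applied. Every pair of boundary-consecutive points is a corner and a midpoint, so there is no hull edge with two corner endpoints to omit. Every Hamiltonian boundary path then fails: omitting $\{m_{31},v_1\}$, the subpath $v_1,m_{12},v_2,m_{23},v_3$ has the whole triangle as its hull, and that hull contains $m_{31}$. This is the only case where the rule is inapplicable. If $A\neq P$ is two-dimensional, some side of $\conv(A)$ crosses the interior of $\conv(P)$ and so contains no further points of $P$.

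Even when the rule applies, it can lead to a dead end. Let $\pi$ have blocks $\{v_1\}$ and $A=P-\{v_1\}$.
\begin{itemize}
\item Your rule allows omitting the chord $\{m_{31},m_{12}\}$. This gives the path $m_{12},v_2,m_{23},v_3,m_{31}$, whose two leaves are internal collinearities of $P$.
\item Only one connecting edge is added. From $v_1$ it can only go to $m_{12}$, $m_{31}$ or $m_{23}$, since $v_1v_2$ and $v_1v_3$ pass through midpoints.
\item Attaching to $m_{12}$ fails: the geodesic from $v_1$ to $v_3$ has a hull containing $m_{31}$.
\item Attaching to $m_{31}$ fails: the geodesic from $v_1$ to $v_2$ has a hull containing $m_{12}$.
\item Attaching to $m_{23}$ fails: the hull of the geodesic $v_1,m_{23},v_2$ contains $m_{12}$.
\end{itemize}
Nevertheless $\pi\in\bool(\tau)$ for the path $v_1,m_{12},m_{31},v_3,m_{23},v_2$, which uses a different spanning tree of $A$. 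So the block trees must be chosen with the connections in mind. Sliding only the attaching edge, as you propose, cannot repair a bad choice.

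Beyond this, Stage~2 is a list of intentions rather than an argument.
\begin{itemize}
\item The leaves-at-corners criterion is only asserted in the paper, not proved, so you would have to prove it.
\item Your induction on blocks really needs every convex-geodesic forest to extend, one edge at a time, to a convex-geodesic tree. The forest above (the path on $A$ plus the isolated vertex $v_1$) shows this is false in general.
\item Lemma~\ref{lem:slide} compares two noncrossing \emph{trees} with convex geodesics, so it cannot certify slides performed on intermediate forests.
\end{itemize}

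The paper sidesteps all of this by never constructing a tree from scratch. It starts from the known classical case at the maximal elements of $\HC(n)$ and inducts along elementary collapses $m\colon Q\to P$.
\begin{itemize}
\item It views $\pi$ as $\rho\in\NC(Q)$ with $\rho\in\bool(\tau)$.
\item The only way $m(\tau)$ can fail is that the collapsed vertex $z_{i,0}$ is off the geodesic in $\tau$ from $z_-$ to $z_+$.
\item This single local defect is repaired by slides at the attachment vertex $v_j$. When $z_{i,0}$ is a singleton of $\rho$, the edge at $z_{i,0}$ is first slid along that geodesic.
\item Lemma~\ref{lem:slide} is applied only to genuine convex-geodesic trees on $Q$.
\end{itemize}
To rescue a direct construction, you would need an explicit rule that chooses the block trees and the connecting edges together. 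You would also need a proof that the resulting tree has convex geodesics.
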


\begin{figure}
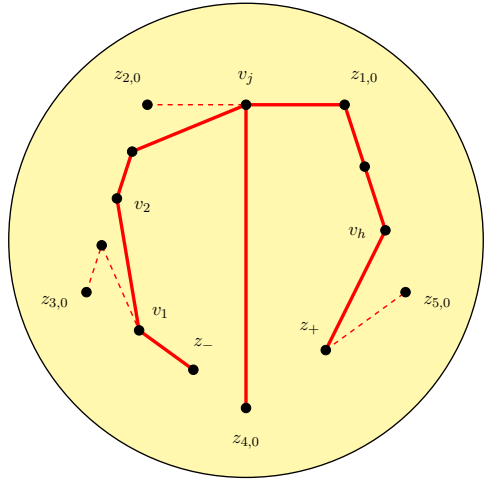

    \centering
    \includestandalone[width=0.5\textwidth]{fig-thm-boolean}
    \caption{A noncrossing tree $\tau$ on a hull configuration $Q$, with the
    geodesic between vertices $z_-$ and $z_+$ highlighted.}
    \label{fig:geodesic}
\end{figure}

\begin{proof}
    From \cite{hks16} and \cite{hypertrees}, we already know that 
    $\NC(P)$ is a union of maximal Boolean subposets when $P$ is
    a maximal element of $\HC(n)$. Now, suppose that there is a hull 
    configuration $Q \in \HC(n)$ such that $\NC(Q)$ is a union
    of maximal Boolean subposets, and let $P = m(Q)$ be the configuration
    obtained by applying the elementary collapse $m$ to $Q$.
    Since every element of $\HC(n)$ can be obtained
    by applying a finite sequence of elementary collapses to a maximal
    element, it suffices to show that $\NC(P)$ is a union of maximal 
    Boolean subposets. 

    Let $\pi \in \NC(P)$ with $\shape(\pi) = [c_1;\cdots;c_k]$ and suppose 
    that $z_{i,0}$ is the external vertex of $Q$ which is made into an 
    internal collinearity of $P$ under the elementary collapse $m$. Then 
    $\pi = m(\rho)$ for some $\rho \in \NC(Q)$, and by combining 
    Theorem~\ref{thm:boolean-bijection} with the assumption that $\NC(Q)$ is 
    a union of maximal Boolean subposets, we know that $\rho \in \bool(\tau)$ 
    for some noncrossing tree $\tau$ on $Q$ 
    with convex geodesics. If the image $m(\tau)$ is a noncrossing tree on $P$
    with convex geodesics, then $\pi$ belongs to the maximal Boolean subposet
    $\bool(m(\tau))$ and we are done. 
    
    Suppose instead that $m(\tau)$ either fails to have convex geodesics or
    fails to be noncrossing. As a convenient shorthand, let 
    $z_-$ and $z_+$ be the elements of $Q$ which appear immediately before and 
    immediately after $z_{i,0}$ in the counterclockwise cyclic order on $Q$.
    Concretely, $z_- = z_{i-1,c_{i-1}}$ and $z_+$ is either
    $z_{i,1}$ if $c_i > 0$ or $z_{i+1,0}$ if $c_i = 0$. If $m(\tau)$ fails
    to have convex geodesics, then this means by definition that 
    $z_{i,0}$ does not lie on the geodesic from $z_{i-1,0}$ to $z_{i+1,0}$,
    which in particular means that $z_{i,0}$ does not lie on the geodesic in 
    $\tau$ from $z_-$ to $z_+$. If $m(\tau)$ fails to be noncrossing,
    then $\tau$ includes an edge between the sides labeled $i-1$ and $i$
    in $\conv(Q)$ which does not include $z_{i,0}$, and we are again in the 
    preceding case: $z_{i,0}$ does not lie on the geodesic in $\tau$ from $z_-$ to $z_+$.
    
    Let us relabel the vertices in the geodesic in $\tau$ from $z_-$ to $z_+$
    in the following manner: $z_- = v_0, v_1, \ldots, v_h, v_{h+1} = z_+$.
    Then $z_{i,0}$ is the endpoint of a unique edge in $\tau$, and the other endpoint
    is some $v_j$ on the aforementioned geodesic---see Figure~\ref{fig:geodesic}
    for an illustration. If $v_j$ is in the same block of $\rho$ as $z_{i,0}$, then 
    by Lemma~\ref{lem:slide},
    we can slide the edge connecting $v_j$ to $v_{j-1}$ (or the edge connecting $v_j$ to
    $v_{j+1}$) along the edge connecting
    $v_j$ to $z_{i,0}$ to obtain a noncrossing tree $\tau'$ with convex geodesics
    on $Q$ such that $\rho \in \bool(\tau')$ and $m(\tau')$ is a noncrossing tree
    with convex geodesics on $P$. 

    Now suppose that $v_j$ and $z_{i,0}$ do not share a block in $\rho$,
    and observe that this implies that $z_{i,0}$ must be a singleton
    in $\rho$, else $\tau$ contains an edge between two points in the 
    block containing $z_{i,0}$, and this necessarily crosses an edge
    in the geodesic in $\tau$ from $z_-$ to $z_+$, contradicting the
    assumption that $\tau$ is noncrossing. Since we know that $\pi = m(\rho)$
    is a noncrossing partition of the configuration $P = m(Q)$, it can't be the
    case that a point on side $i-1$ of $\conv(P)$ shares a block with a
    point on side $i$ while $z_{i,0}$ is a singleton block.
    It then follows that there is some $v_l$ of minimal distance
    from $v_j$ such that $v_l$ and $v_j$ are not in the same block, but 
    every point on the geodesic between those two is in the same block as $v_j$.
    Then we can slide the edge between $v_j$ and $z_{i,0}$ along the geodesic 
    until we reach the vertex before $v_l$, which must not lie on side $i-1$ or side $i$, at which point we are in the setting of the previous case, and the proof is complete.
\end{proof}

\begin{figure}
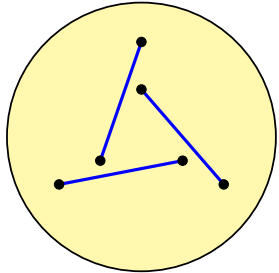

    \centering
    \includestandalone[width = 0.3\textwidth]{fig-boolean-nonexample}
    \caption{This noncrossing partition with vertex set $P$ 
    cannot be expressed as $\partition(\mu)$ where $\mu$ is a subforest 
    of a noncrossing tree with convex geodesics on $P$, so it does not
    belong to any maximal Boolean subposet of $\NC(P)$.}
    \label{fig:boolean-union-nonexample}
\end{figure}

It is worth noting that there are some configurations $P$ (which are not
hull configurations) such that 
$\NC(P)$ is not a union of maximal Boolean
subposets. See Figure~\ref{fig:boolean-union-nonexample} for
an example and observe that in this instance, $\NC(P)$ is not even
graded \cite[Section~4]{cdhm24}.

\section*{Acknowledgments}

We are grateful to Lafayette College for support in Spring 2025, when
much of the research for this article took place. The first author is grateful
to Jon McCammond for helpful conversations. The first author is
partially supported by NSF grant DMS-2532608.

\bibliographystyle{amsalpha}
\bibliography{nc-hull}

\end{document}

%% file: tikz-styles.tex
\tikzstyle{PurpleLine}=[line width=0.3mm,color=blue-violet,text=black]
\tikzstyle{PurplePoly}=[PurpleLine,fill=blue-violet!30]
\tikzstyle{BlueLine}=[line width=0.3mm,color=blue,text=black]
\tikzstyle{BluePoly}=[BlueLine,fill=blue!20]
\tikzstyle{RedLine}=[line width=0.3mm,color=red,text=black]
\tikzstyle{RedPoly}=[RedLine,fill=red!20]
\tikzstyle{GreenLine}=[thick,draw=black!30!green,text=black]
\tikzstyle{GreenPoly}=[thick,draw=green!50!black,fill=green!30,join=bevel]
\tikzstyle{OrangeLine}=[thick,color=orange]
\tikzstyle{GrayLine}=[thick,color=black!50!gray]
\tikzstyle{GrayPoly}=[GrayLine,fill=gray!20]
\tikzstyle{dot}=[shape=circle,draw,color=black,fill=black,inner sep=1.5pt]
\tikzstyle{bigdot}=[dot,inner sep=2.4pt]
\tikzstyle{littledot}=[dot,inner sep=1.2pt]
\tikzstyle{disk}=[thick,shape=circle,draw,color=black,fill=yellow!10]
\tikzstyle{plate}=[thick,shape=rectangle,draw,color=black,fill=yellow!10,
	rounded corners,minimum size=1.1cm]
\tikzstyle{dot}=[shape=circle,draw,color=black,fill=black,inner sep=1.5pt]
\tikzstyle{opendot}=[dot,fill=white]

\tikzstyle{YellowRect} = [shape=rectangle,rounded corners,draw,fill=yellow!40,minimum width = 2cm, minimum height=1cm]

%% file: figures/fig-elem-collapse.tex
\begin{tikzpicture}

    \begin{scope}[shift={(-8,0)}, rotate=180]
        \node [shape=rectangle,draw,fill=green!20, inner sep = 5cm] () at (0,0) {};
    
        \foreach \i in {1,...,5} {
            \coordinate (a\i) at (72*\i+72+90:3.5);
        }
    
        \coordinate (b1) at ($(a4)!0.5!(a5)$);
        \coordinate (b2) at ($(a5)!0.33!(a1)$);
        \coordinate (b3) at ($(a5)!0.67!(a1)$);
        \coordinate (b4) at ($(a2)!0.25!(a3)$);
        \coordinate (b5) at ($(a2)!0.5!(a3)$);
        \coordinate (b6) at ($(a2)!0.75!(a3)$);
        \coordinate (b7) at ($(a3)!0.33!(a4)$);
        \coordinate (b8) at ($(a3)!0.67!(a4)$);
        \coordinate (b9) at ($(a1)!0.5!(a2)$);
    
        \foreach \i in {1,...,5} {
            \draw (a\i) node [dot] {};
        }
    
        \foreach \i in {1,...,8,9} {
            \draw (b\i) node [dot] {};
        }
    \end{scope}

    \draw[->, line width = 1.8] (-2,0) -- (2,0);

    \begin{scope}[shift={(8,0)}, rotate=180]
        \node [shape=rectangle,draw,fill=green!20, inner sep = 5cm] () at (0,0) {};
    
        \foreach \i in {1,2,3,5} {
            \coordinate (a\i) at (72*\i+72+90:3.5);
        }

        \coordinate (a4) at ($(a3)!0.5!(a5)$);
    
        \coordinate (b1) at ($(a4)!0.5!(a5)$);
        \coordinate (b2) at ($(a5)!0.33!(a1)$);
        \coordinate (b3) at ($(a5)!0.67!(a1)$);
        \coordinate (b4) at ($(a2)!0.25!(a3)$);
        \coordinate (b5) at ($(a2)!0.5!(a3)$);
        \coordinate (b6) at ($(a2)!0.75!(a3)$);
        \coordinate (b7) at ($(a3)!0.33!(a4)$);
        \coordinate (b8) at ($(a3)!0.67!(a4)$);
        \coordinate (b9) at ($(a1)!0.5!(a2)$);
    
        \foreach \i in {1,...,5} {
            \draw (a\i) node [dot] {};
        }
    
        \foreach \i in {1,...,8,9} {
            \draw (b\i) node [dot] {};
        }
    \end{scope}

\end{tikzpicture}

%% file: figures/fig-configuration-example.tex
\begin{tikzpicture}

    \node [shape=rectangle,draw,fill=green!20, inner sep = 5cm] () at (0,0) {};
    \tikzstyle{dot}=[shape=circle,draw,color=black,fill=black,inner sep=2.25pt]

    \foreach \i in {1,...,5} {
        \coordinate (a\i) at (72*\i+72+90:3.5);
    }

    \coordinate (b1) at ($(a4)!0.5!(a5)$);
    \coordinate (b2) at ($(a5)!0.33!(a1)$);
    \coordinate (b3) at ($(a5)!0.67!(a1)$);
    \coordinate (b4) at ($(a2)!0.25!(a3)$);
    \coordinate (b5) at ($(a2)!0.5!(a3)$);
    \coordinate (b6) at ($(a2)!0.75!(a3)$);
    \coordinate (b7) at ($(a3)!0.33!(a4)$);
    \coordinate (b8) at ($(a3)!0.67!(a4)$);

    \foreach \i in {1,...,5} {
        \draw (a\i) node [dot] {};
    }

    \foreach \i in {1,...,8} {
        \draw (b\i) node [dot] {};
    }

    \foreach \i in {1,...,5} {
        \draw ($(0,0)!1.2!(a\i)$) node () {\large $z_{\i,0}$};
    }

    \draw ($(0,0)!1.2!(b1)$) node () {\large $z_{4,1}$};
    \draw ($(0,0)!1.2!(b2)$) node () {\large $z_{5,1}$};
    \draw ($(0,0)!1.2!(b3)$) node () {\large $z_{5,2}$};
    \draw ($(0,0)!1.2!(b4)$) node () {\large $z_{2,1}$};
    \draw ($(0,0)!1.2!(b5)$) node () {\large $z_{2,2}$};
    \draw ($(0,0)!1.2!(b6)$) node () {\large $z_{2,3}$};
    \draw ($(0,0)!1.2!(b7)$) node () {\large $z_{3,1}$};
    \draw ($(0,0)!1.2!(b8)$) node () {\large $z_{3,2}$};
\end{tikzpicture}

%% file: figures/fig-cc-lower-set.tex
\begin{tikzpicture}
    \newcommand{\x}{4}
    \newcommand{\y}{8}


    \begin{scope}[shift={(0,2*\y)}]
        \node [shape=rectangle,draw,fill=green!20, inner sep = 1.5cm] (top) at (0,0) {};
        \node[bigdot,fill=red] () at (45:1) {};
        \node[bigdot,fill=yellow!90!black] () at (135:1) {};
        \node[bigdot,fill=green!80!black] () at (225:1) {};
        \node[bigdot,fill=blue] () at (315:1) {};

        \node () at (45:1.4) {1};
        \node () at (135:1.4) {2};
        \node () at (225:1.4) {3};
        \node () at (315:1.4) {4};
    \end{scope}


    \begin{scope}[shift={(-1.5*\x,\y)}]
        \node [shape=rectangle,draw,fill=green!20, inner sep = 1.5cm] (m1) at (0,0) {};
        \node[bigdot,fill=red] (d) at (90:0.7) {};
        \node[bigdot,fill=yellow!90!black] (a) at (210:0.7) {};
        \node[bigdot,fill=blue] (b) at (330:0.7) {};
        \node[bigdot,fill=green!80!black] (c) at ($(a)!0.5!(b)$) {};

        \node () at ($(d)+(0,0.5)$) {1};
        \node () at ($(a)+(0,-0.5)$) {2};
        \node () at ($(c)+(0,-0.5)$) {3};
        \node () at ($(b)+(0,-0.5)$) {4};
    \end{scope}

    \begin{scope}[shift={(1.5*\x,\y)}]
        \node [shape=rectangle,draw,fill=green!20, inner sep = 1.5cm] (m2) at (0,0) {};
        \node[bigdot,fill=yellow!90!black] (d) at (90:0.7) {};
        \node[bigdot,fill=green!80!black] (a) at (210:0.7) {};
        \node[bigdot,fill=red] (b) at (330:0.7) {};
        \node[bigdot,fill=blue] (c) at ($(a)!0.5!(b)$) {};

        \node () at ($(d)+(0,0.5)$) {2};
        \node () at ($(a)+(0,-0.5)$) {3};
        \node () at ($(c)+(0,-0.5)$) {4};
        \node () at ($(b)+(0,-0.5)$) {1};
    \end{scope}
    
    \begin{scope}[shift={(0.5*\x,\y)}]
        \node [shape=rectangle,draw,fill=green!20, inner sep = 1.5cm] (m3) at (0,0) {};
        \node[bigdot,fill=green!80!black] (d) at (90:0.7) {};
        \node[bigdot,fill=blue] (a) at (210:0.7) {};
        \node[bigdot,fill=yellow!90!black] (b) at (330:0.7) {};
        \node[bigdot,fill=red] (c) at ($(a)!0.5!(b)$) {};

        \node () at ($(d)+(0,0.5)$) {3};
        \node () at ($(a)+(0,-0.5)$) {4};
        \node () at ($(c)+(0,-0.5)$) {1};
        \node () at ($(b)+(0,-0.5)$) {2};
    \end{scope}
    
    \begin{scope}[shift={(-0.5*\x,\y)}]
        \node [shape=rectangle,draw,fill=green!20, inner sep = 1.5cm] (m4) at (0,0) {};
        \node[bigdot,fill=blue] (d) at (90:0.7) {};
        \node[bigdot,fill=red] (a) at (210:0.7) {};
        \node[bigdot,fill=green!80!black] (b) at (330:0.7) {};
        \node[bigdot,fill=yellow!90!black] (c) at ($(a)!0.5!(b)$) {};

        \node () at ($(d)+(0,0.5)$) {4};
        \node () at ($(a)+(0,-0.5)$) {1};
        \node () at ($(c)+(0,-0.5)$) {2};
        \node () at ($(b)+(0,-0.5)$) {3};
    \end{scope}

    
    \begin{scope}[shift={(-3.5*\x,0)}]
        \node [shape=rectangle,draw,fill=green!20, inner sep = 1.5cm] (b1) at (0,0) {};
        \node[bigdot,fill=yellow!90!black] (a) at (-1.05,0) {};
        \node[bigdot,fill=green!80!black] (b) at (-0.35,0) {};
        \node[bigdot,fill=red] (c) at (0.35,0) {};
        \node[bigdot,fill=blue] (d) at (1.05,0) {};

        \node () at ($(a)+(0,-0.5)$) {2};
        \node () at ($(b)+(0,-0.5)$) {3};
        \node () at ($(c)+(0,-0.5)$) {1};
        \node () at ($(d)+(0,-0.5)$) {4};
    \end{scope}

    \begin{scope}[shift={(-2.5*\x,0)}]
        \node [shape=rectangle,draw,fill=green!20, inner sep = 1.5cm] (b2) at (0,0) {};
        \node[bigdot,fill=yellow!90!black] (a) at (-1.05,0) {};
        \node[bigdot,fill=red] (b) at (-0.35,0) {};
        \node[bigdot,fill=green!80!black] (c) at (0.35,0) {};
        \node[bigdot,fill=blue] (d) at (1.05,0) {};

        \node () at ($(a)+(0,-0.5)$) {2};
        \node () at ($(b)+(0,-0.5)$) {1};
        \node () at ($(c)+(0,-0.5)$) {3};
        \node () at ($(d)+(0,-0.5)$) {4};
    \end{scope}

    \begin{scope}[shift={(-1.5*\x,0)}]
        \node [shape=rectangle,draw,fill=green!20, inner sep = 1.5cm] (b3) at (0,0) {};
        \node[bigdot,fill=yellow!90!black] (a) at (-1.05,0) {};
        \node[bigdot,fill=green!80!black] (b) at (-0.35,0) {};
        \node[bigdot,fill=blue] (c) at (0.35,0) {};
        \node[bigdot,fill=red] (d) at (1.05,0) {};

        \node () at ($(a)+(0,-0.5)$) {2};
        \node () at ($(b)+(0,-0.5)$) {3};
        \node () at ($(c)+(0,-0.5)$) {4};
        \node () at ($(d)+(0,-0.5)$) {1};
    \end{scope}

    \begin{scope}[shift={(-0.5*\x,0)}]
        \node [shape=rectangle,draw,fill=green!20, inner sep = 1.5cm] (b4) at (0,0) {};
        \node[bigdot,fill=red] (a) at (-1.05,0) {};
        \node[bigdot,fill=yellow!90!black] (b) at (-0.35,0) {};
        \node[bigdot,fill=green!80!black] (c) at (0.35,0) {};
        \node[bigdot,fill=blue] (d) at (1.05,0) {};

        \node () at ($(a)+(0,-0.5)$) {1};
        \node () at ($(b)+(0,-0.5)$) {2};
        \node () at ($(c)+(0,-0.5)$) {3};
        \node () at ($(d)+(0,-0.5)$) {4};
    \end{scope}

    \begin{scope}[shift={(0.5*\x,0)}]
        \node [shape=rectangle,draw,fill=green!20, inner sep = 1.5cm] (b5) at (0,0) {};
        \node[bigdot,fill=blue] (a) at (-1.05,0) {};
        \node[bigdot,fill=red] (b) at (-0.35,0) {};
        \node[bigdot,fill=yellow!90!black] (c) at (0.35,0) {};
        \node[bigdot,fill=green!80!black] (d) at (1.05,0) {};

        \node () at ($(a)+(0,-0.5)$) {4};
        \node () at ($(b)+(0,-0.5)$) {1};
        \node () at ($(c)+(0,-0.5)$) {2};
        \node () at ($(d)+(0,-0.5)$) {3};
    \end{scope}

    \begin{scope}[shift={(1.5*\x,0)}]
        \node [shape=rectangle,draw,fill=green!20, inner sep = 1.5cm] (b6) at (0,0) {};
        \node[bigdot,fill=green!80!black] (a) at (-1.05,0) {};
        \node[bigdot,fill=blue] (b) at (-0.35,0) {};
        \node[bigdot,fill=red] (c) at (0.35,0) {};
        \node[bigdot,fill=yellow!90!black] (d) at (1.05,0) {};

        \node () at ($(a)+(0,-0.5)$) {3};
        \node () at ($(b)+(0,-0.5)$) {4};
        \node () at ($(c)+(0,-0.5)$) {1};
        \node () at ($(d)+(0,-0.5)$) {2};
    \end{scope}

    \begin{scope}[shift={(2.5*\x,0)}]
        \node [shape=rectangle,draw,fill=green!20, inner sep = 1.5cm] (b7) at (0,0) {};
        \node[bigdot,fill=green!80!black] (a) at (-1.05,0) {};
        \node[bigdot,fill=blue] (b) at (-0.35,0) {};
        \node[bigdot,fill=yellow!90!black] (c) at (0.35,0) {};
        \node[bigdot,fill=red] (d) at (1.05,0) {};

        \node () at ($(a)+(0,-0.5)$) {3};
        \node () at ($(b)+(0,-0.5)$) {4};
        \node () at ($(c)+(0,-0.5)$) {2};
        \node () at ($(d)+(0,-0.5)$) {1};
    \end{scope}

    \begin{scope}[shift={(3.5*\x,0)}]
        \node [shape=rectangle,draw,fill=green!20, inner sep = 1.5cm] (b8) at (0,0) {};
        \node[bigdot,fill=green!80!black] (a) at (-1.05,0) {};
        \node[bigdot,fill=yellow!90!black] (b) at (-0.35,0) {};
        \node[bigdot,fill=blue] (c) at (0.35,0) {};
        \node[bigdot,fill=red] (d) at (1.05,0) {};

        \node () at ($(a)+(0,-0.5)$) {3};
        \node () at ($(b)+(0,-0.5)$) {2};
        \node () at ($(c)+(0,-0.5)$) {4};
        \node () at ($(d)+(0,-0.5)$) {1};
    \end{scope}


    \begin{scope}[line width = 0.4mm]
        \path (top.south) 
            edge (m1.north) 
            edge (m2.north) 
            edge (m3.north) 
            edge (m4.north);
    
        \path (m1.south) 
            edge (b1.north) 
            edge (b2.north) 
            edge (b3.north) 
            edge (b4.north);
        \path (m2.south) 
            edge (b3.north) 
            edge (b6.north) 
            edge (b7.north) 
            edge (b8.north);
        \path (m3.south) 
            edge (b1.north) 
            edge (b5.north) 
            edge (b2.north) 
            edge (b6.north);
        \path (m4.south) 
            edge (b8.north) 
            edge (b4.north) 
            edge (b5.north) 
            edge (b7.north);
    \end{scope}






\end{tikzpicture}

%% file: figures/fig-cc-upper-set.tex
\begin{tikzpicture}
    \newcommand{\x}{4}
    \newcommand{\y}{8}
    

    \begin{scope}[shift={(-4.5*\x,2*\y)}]
        \node [shape=rectangle,draw,fill=green!20, inner sep = 1.5cm] (t1) at (0,0) {};
        \node[bigdot,fill=red] (a) at (90:0.8) {};
        \node[bigdot,fill=yellow!90!black] (b) at (210:0.8) {};
        \node[bigdot,fill=blue] (c) at (330:0.8) {};
        \node[bigdot,fill=green!80!black] (d) at (0,0) {};

        \node () at (90:1.2) {1};
        \node () at (210:1.2) {2};
        \node () at (330:1.2) {4};
        \node () at (270:0.4) {3};
    \end{scope}

    \begin{scope}[shift={(-3.5*\x,2*\y)}]
        \node [shape=rectangle,draw,fill=green!20, inner sep = 1.5cm] (t2) at (0,0) {};
        \node[bigdot,fill=yellow!90!black] (a) at (90:0.8) {};
        \node[bigdot,fill=green!80!black] (b) at (210:0.8) {};
        \node[bigdot,fill=red] (c) at (330:0.8) {};
        \node[bigdot,fill=blue] (d) at (0,0) {};

        \node () at (90:1.2) {2};
        \node () at (210:1.2) {3};
        \node () at (330:1.2) {1};
        \node () at (270:0.4) {4};
    \end{scope}

    \begin{scope}[shift={(-2.5*\x,2*\y)}]
        \node [shape=rectangle,draw,fill=green!20, inner sep = 1.5cm] (t3) at (0,0) {};
        \node[bigdot,fill=green!80!black] (a) at (90:0.8) {};
        \node[bigdot,fill=blue] (b) at (210:0.8) {};
        \node[bigdot,fill=yellow!90!black] (c) at (330:0.8) {};
        \node[bigdot,fill=red] (d) at (0,0) {};

        \node () at (90:1.2) {3};
        \node () at (210:1.2) {4};
        \node () at (330:1.2) {2};
        \node () at (270:0.4) {1};
    \end{scope}

    \begin{scope}[shift={(-1.5*\x,2*\y)}]
        \node [shape=rectangle,draw,fill=green!20, inner sep = 1.5cm] (t4) at (0,0) {};
        \node[bigdot,fill=blue] (a) at (90:0.8) {};
        \node[bigdot,fill=red] (b) at (210:0.8) {};
        \node[bigdot,fill=green!80!black] (c) at (330:0.8) {};
        \node[bigdot,fill=yellow!90!black] (d) at (0,0) {};

        \node () at (90:1.2) {4};
        \node () at (210:1.2) {1};
        \node () at (330:1.2) {3};
        \node () at (270:0.4) {2};
    \end{scope}

    \begin{scope}[shift={(-0.5*\x,2*\y)}]
        \node [shape=rectangle,draw,fill=green!20, inner sep = 1.5cm] (t5) at (0,0) {};
        \node[bigdot,fill=red] () at (45:1) {};
        \node[bigdot,fill=yellow!90!black] () at (135:1) {};
        \node[bigdot,fill=green!80!black] () at (225:1) {};
        \node[bigdot,fill=blue] () at (315:1) {};

        \node () at (45:1.4) {1};
        \node () at (135:1.4) {2};
        \node () at (225:1.4) {3};
        \node () at (315:1.4) {4};
    \end{scope}
    
    \begin{scope}[shift={(0.5*\x,2*\y)}]
        \node [shape=rectangle,draw,fill=green!20, inner sep = 1.5cm] (t6) at (0,0) {};
        \node[bigdot,fill=yellow!90!black] () at (45:1) {};
        \node[bigdot,fill=red] () at (135:1) {};
        \node[bigdot,fill=blue] () at (225:1) {};
        \node[bigdot,fill=green!80!black] () at (315:1) {};

        \node () at (45:1.4) {2};
        \node () at (135:1.4) {1};
        \node () at (225:1.4) {4};
        \node () at (315:1.4) {3};
    \end{scope}
    
    \begin{scope}[shift={(4.5*\x,2*\y)}]
        \node [shape=rectangle,draw,fill=green!20, inner sep = 1.5cm] (t10) at (0,0) {};
        \node[bigdot,fill=red] (a) at (90:0.8) {};
        \node[bigdot,fill=blue] (b) at (210:0.8) {};
        \node[bigdot,fill=yellow!90!black] (c) at (330:0.8) {};
        \node[bigdot,fill=green!80!black] (d) at (0,0) {};

        \node () at (90:1.2) {1};
        \node () at (210:1.2) {4};
        \node () at (330:1.2) {2};
        \node () at (270:0.4) {3};
    \end{scope}
    
    \begin{scope}[shift={(3.5*\x,2*\y)}]
        \node [shape=rectangle,draw,fill=green!20, inner sep = 1.5cm] (t9) at (0,0) {};
        \node[bigdot,fill=yellow!90!black] (a) at (90:0.8) {};
        \node[bigdot,fill=red] (b) at (210:0.8) {};
        \node[bigdot,fill=green!80!black] (c) at (330:0.8) {};
        \node[bigdot,fill=blue] (d) at (0,0) {};

        \node () at (90:1.2) {2};
        \node () at (210:1.2) {1};
        \node () at (330:1.2) {3};
        \node () at (270:0.4) {4};
    \end{scope}

    \begin{scope}[shift={(2.5*\x,2*\y)}]
        \node [shape=rectangle,draw,fill=green!20, inner sep = 1.5cm] (t8) at (0,0) {};
        \node[bigdot,fill=green!80!black] (a) at (90:0.8) {};
        \node[bigdot,fill=yellow!90!black] (b) at (210:0.8) {};
        \node[bigdot,fill=blue] (c) at (330:0.8) {};
        \node[bigdot,fill=red] (d) at (0,0) {};

        \node () at (90:1.2) {3};
        \node () at (210:1.2) {2};
        \node () at (330:1.2) {4};
        \node () at (270:0.4) {1};
    \end{scope}

    \begin{scope}[shift={(1.5*\x,2*\y)}]
        \node [shape=rectangle,draw,fill=green!20, inner sep = 1.5cm] (t7) at (0,0) {};
        \node[bigdot,fill=blue] (a) at (90:0.8) {};
        \node[bigdot,fill=green!80!black] (b) at (210:0.8) {};
        \node[bigdot,fill=red] (c) at (330:0.8) {};
        \node[bigdot,fill=yellow!90!black] (d) at (0,0) {};

        \node () at (90:1.2) {4};
        \node () at (210:1.2) {3};
        \node () at (330:1.2) {1};
        \node () at (270:0.4) {2};
    \end{scope}


    \begin{scope}[shift={(-3.5*\x,\y)}]
        \node [shape=rectangle,draw,fill=green!20, inner sep = 1.5cm] (m1) at (0,0) {};
        \node[bigdot,fill=red] (d) at (90:0.7) {};
        \node[bigdot,fill=yellow!90!black] (a) at (210:0.7) {};
        \node[bigdot,fill=blue] (b) at (330:0.7) {};
        \node[bigdot,fill=green!80!black] (c) at ($(a)!0.5!(b)$) {};

        \node () at ($(d)+(0,0.5)$) {1};
        \node () at ($(a)+(0,-0.5)$) {2};
        \node () at ($(c)+(0,-0.5)$) {3};
        \node () at ($(b)+(0,-0.5)$) {4};
    \end{scope}

    \begin{scope}[shift={(-2.5*\x,\y)}]
        \node [shape=rectangle,draw,fill=green!20, inner sep = 1.5cm] (m2) at (0,0) {};
        \node[bigdot,fill=yellow!90!black] (d) at (90:0.7) {};
        \node[bigdot,fill=green!80!black] (a) at (210:0.7) {};
        \node[bigdot,fill=red] (b) at (330:0.7) {};
        \node[bigdot,fill=blue] (c) at ($(a)!0.5!(b)$) {};

        \node () at ($(d)+(0,0.5)$) {2};
        \node () at ($(a)+(0,-0.5)$) {3};
        \node () at ($(c)+(0,-0.5)$) {4};
        \node () at ($(b)+(0,-0.5)$) {1};
    \end{scope}
    
    \begin{scope}[shift={(-1.5*\x,\y)}]
        \node [shape=rectangle,draw,fill=green!20, inner sep = 1.5cm] (m3) at (0,0) {};
        \node[bigdot,fill=green!80!black] (d) at (90:0.7) {};
        \node[bigdot,fill=blue] (a) at (210:0.7) {};
        \node[bigdot,fill=yellow!90!black] (b) at (330:0.7) {};
        \node[bigdot,fill=red] (c) at ($(a)!0.5!(b)$) {};

        \node () at ($(d)+(0,0.5)$) {3};
        \node () at ($(a)+(0,-0.5)$) {4};
        \node () at ($(c)+(0,-0.5)$) {1};
        \node () at ($(b)+(0,-0.5)$) {2};
    \end{scope}
    
    \begin{scope}[shift={(-0.5*\x,\y)}]
        \node [shape=rectangle,draw,fill=green!20, inner sep = 1.5cm] (m4) at (0,0) {};
        \node[bigdot,fill=blue] (d) at (90:0.7) {};
        \node[bigdot,fill=red] (a) at (210:0.7) {};
        \node[bigdot,fill=green!80!black] (b) at (330:0.7) {};
        \node[bigdot,fill=yellow!90!black] (c) at ($(a)!0.5!(b)$) {};

        \node () at ($(d)+(0,0.5)$) {4};
        \node () at ($(a)+(0,-0.5)$) {1};
        \node () at ($(c)+(0,-0.5)$) {2};
        \node () at ($(b)+(0,-0.5)$) {3};
    \end{scope}

    \begin{scope}[shift={(3.5*\x,\y)}]
        \node [shape=rectangle,draw,fill=green!20, inner sep = 1.5cm] (m8) at (0,0) {};
        \node[bigdot,fill=red] (d) at (90:0.7) {};
        \node[bigdot,fill=blue] (a) at (210:0.7) {};
        \node[bigdot,fill=yellow!90!black] (b) at (330:0.7) {};
        \node[bigdot,fill=green!80!black] (c) at ($(a)!0.5!(b)$) {};

        \node () at ($(d)+(0,0.5)$) {1};
        \node () at ($(a)+(0,-0.5)$) {4};
        \node () at ($(c)+(0,-0.5)$) {3};
        \node () at ($(b)+(0,-0.5)$) {2};
    \end{scope}

    \begin{scope}[shift={(2.5*\x,\y)}]
        \node [shape=rectangle,draw,fill=green!20, inner sep = 1.5cm] (m7) at (0,0) {};
        \node[bigdot,fill=yellow!90!black] (d) at (90:0.7) {};
        \node[bigdot,fill=red] (a) at (210:0.7) {};
        \node[bigdot,fill=green!80!black] (b) at (330:0.7) {};
        \node[bigdot,fill=blue] (c) at ($(a)!0.5!(b)$) {};

        \node () at ($(d)+(0,0.5)$) {2};
        \node () at ($(a)+(0,-0.5)$) {1};
        \node () at ($(c)+(0,-0.5)$) {4};
        \node () at ($(b)+(0,-0.5)$) {3};
    \end{scope}
    
    \begin{scope}[shift={(1.5*\x,\y)}]
        \node [shape=rectangle,draw,fill=green!20, inner sep = 1.5cm] (m6) at (0,0) {};
        \node[bigdot,fill=green!80!black] (d) at (90:0.7) {};
        \node[bigdot,fill=yellow!90!black] (a) at (210:0.7) {};
        \node[bigdot,fill=blue] (b) at (330:0.7) {};
        \node[bigdot,fill=red] (c) at ($(a)!0.5!(b)$) {};

        \node () at ($(d)+(0,0.5)$) {3};
        \node () at ($(a)+(0,-0.5)$) {2};
        \node () at ($(c)+(0,-0.5)$) {1};
        \node () at ($(b)+(0,-0.5)$) {4};
    \end{scope}
    
    \begin{scope}[shift={(0.5*\x,\y)}]
        \node [shape=rectangle,draw,fill=green!20, inner sep = 1.5cm] (m5) at (0,0) {};
        \node[bigdot,fill=blue] (d) at (90:0.7) {};
        \node[bigdot,fill=green!80!black] (a) at (210:0.7) {};
        \node[bigdot,fill=red] (b) at (330:0.7) {};
        \node[bigdot,fill=yellow!90!black] (c) at ($(a)!0.5!(b)$) {};

        \node () at ($(d)+(0,0.5)$) {4};
        \node () at ($(a)+(0,-0.5)$) {3};
        \node () at ($(c)+(0,-0.5)$) {2};
        \node () at ($(b)+(0,-0.5)$) {1};
    \end{scope}

    
    \begin{scope}[shift={(0,0)}]
        \node [shape=rectangle,draw,fill=green!20, inner sep = 1.5cm] (bottom) at (0,0) {};
        \node[bigdot,fill=red] (a) at (-1.05,0) {};
        \node[bigdot,fill=yellow!90!black] (b) at (-0.35,0) {};
        \node[bigdot,fill=green!80!black] (c) at (0.35,0) {};
        \node[bigdot,fill=blue] (d) at (1.05,0) {};

        \node () at ($(a)+(0,-0.5)$) {1};
        \node () at ($(b)+(0,-0.5)$) {2};
        \node () at ($(c)+(0,-0.5)$) {3};
        \node () at ($(d)+(0,-0.5)$) {4};
    \end{scope}


    \path (bottom.north) 
        edge (m1.south) 
        edge (m2.south) 
        edge (m3.south) 
        edge (m4.south)
        edge (m5.south) 
        edge (m6.south) 
        edge (m7.south) 
        edge (m8.south);

    \path (t5.south)
        edge (m1.north)
        edge (m2.north)
        edge (m3.north)
        edge (m4.north);

    \path (t6.south)
        edge (m5.north)
        edge (m6.north)
        edge (m7.north)
        edge (m8.north);

    \draw (t1.south) -- (m1.north);
    \draw (t2.south) -- (m2.north);
    \draw (t3.south) -- (m3.north);
    \draw (t4.south) -- (m4.north);

    \draw (t7.south) -- (m5.north);
    \draw (t8.south) -- (m6.north);
    \draw (t9.south) -- (m7.north);
    \draw (t10.south) -- (m8.north);

\end{tikzpicture}

%% file: figures/fig-alpha-beta.tex
    \tikzstyle{disk}=[thick,shape=circle,draw,color=black,fill=yellow!10]
    \tikzstyle{dot}=[shape=circle,draw,color=black,fill=black,inner sep=1.5pt]

    \begin{tikzpicture}
        \begin{scope}[shift={(-3,0)}]
            \node[disk, fill = yellow!40, inner sep = 1.2cm] () at (0,0) {};
            \foreach \i in {1,...,5} {
                \coordinate (a\i) at (72*\i+72+90:1.2);
            }
        
            \coordinate (b1) at ($(a4)!0.5!(a5)$);
            \coordinate (b2) at ($(a5)!0.33!(a1)$);
            \coordinate (b3) at ($(a5)!0.67!(a1)$);
            \coordinate (b4) at ($(a2)!0.25!(a3)$);
            \coordinate (b5) at ($(a2)!0.5!(a3)$);
            \coordinate (b6) at ($(a2)!0.75!(a3)$);
            \coordinate (b7) at ($(a3)!0.33!(a4)$);
            \coordinate (b8) at ($(a3)!0.67!(a4)$);

            \draw[BlueLine, line width = 0.5mm] (a1) -- (b8);
        
            \foreach \i in {1,...,5} {
                \draw (a\i) node [dot] {};
            }
        
            \foreach \i in {1,...,8} {
                \draw (b\i) node [dot] {};
            }
        \end{scope}
    
        \begin{scope}[shift={(3,0)}]
            \node[disk, fill = yellow!40, inner sep = 1.2cm] () at (0,0) {};
            \foreach \i in {1,...,5} {
                \coordinate (a\i) at (72*\i+72+90:1.2);
            }
        
            \coordinate (b1) at ($(a4)!0.5!(a5)$);
            \coordinate (b2) at ($(a5)!0.33!(a1)$);
            \coordinate (b3) at ($(a5)!0.67!(a1)$);
            \coordinate (b4) at ($(a2)!0.25!(a3)$);
            \coordinate (b5) at ($(a2)!0.5!(a3)$);
            \coordinate (b6) at ($(a2)!0.75!(a3)$);
            \coordinate (b7) at ($(a3)!0.33!(a4)$);
            \coordinate (b8) at ($(a3)!0.67!(a4)$);

            \filldraw[BluePoly, line width = 0.5mm] (a1) -- (a2) -- (a3) -- (b8) -- cycle;

            \filldraw[BluePoly, line width = 0.5mm] (a4) -- (a5) -- (b3) -- cycle;
            
            \foreach \i in {1,...,5} {
                \draw (a\i) node [dot] {};
            }
        
            \foreach \i in {1,...,8} {
                \draw (b\i) node [dot] {};
            }
        \end{scope}    
    \end{tikzpicture}

%% file: figures/fig-A-B.tex
    \tikzstyle{disk}=[thick,shape=circle,draw,color=black,fill=yellow!10]
    \tikzstyle{dot}=[shape=circle,draw,color=black,fill=black,inner sep=1.5pt]

    \begin{tikzpicture}
        \begin{scope}[shift={(0,0)}]
            \node[disk, fill = yellow!40, inner sep = 1.5cm] () at (0,0) {};
            \foreach \i in {1,...,5} {
                \coordinate (a\i) at (72*\i+72+90:1.5);
            }
        
            \coordinate (b1) at ($(a4)!0.5!(a5)$);
            \coordinate (b2) at ($(a5)!0.33!(a1)$);
            \coordinate (b3) at ($(a5)!0.67!(a1)$);
            \coordinate (b4) at ($(a2)!0.25!(a3)$);
            \coordinate (b5) at ($(a2)!0.5!(a3)$);
            \coordinate (b6) at ($(a2)!0.75!(a3)$);
            \coordinate (b7) at ($(a3)!0.33!(a4)$);
            \coordinate (b8) at ($(a3)!0.67!(a4)$);

            \filldraw[GreenPoly] (a2) -- (a3) -- (b8) -- cycle;
            \filldraw[GreenPoly] (a4) -- (a5) -- (b3) -- cycle;

            \node () at ($(b7)!0.3!(b4)$) {\small $A_{3,2}$};
            \node () at ($(b1)!0.4!(b3)!0.2!(a5)$) {\small $B_{3,2}$};
        
            \foreach \i in {a2,a3,a4,a5,b1,b2,b3,b4,b5,b6,b7,b8} {
                \draw (\i) node [dot] {};
            }
        
            \foreach \i in {a1} {
                \draw (\i) node [opendot,color = gray, fill = white] {};
            }
        \end{scope}    
        
            
        
    \end{tikzpicture}

%% file: figures/fig-tree-slide.tex
    \tikzstyle{RedLine}=[line width=0.3mm,color=red,text=black]
    \tikzstyle{RedPoly}=[RedLine,fill=red!20]
    \tikzstyle{disk}=[thick,shape=circle,draw,color=black,fill=yellow!10]
    \tikzstyle{dot}=[shape=circle,draw,color=black,fill=black,inner sep=1.5pt]
    \newcommand{\makepoints}{
        \coordinate (r) at (30:1cm);
        \coordinate (q) at (150:1cm);
        \coordinate (p) at (270:1cm);
    }
    \newcommand{\drawpoints}{
        \foreach \x in {p,q,r} {
            \draw (\x) node [dot] {};
        }
    }
    \begin{tikzpicture}
        \begin{scope}[shift={(-4,0)}]
            \node[disk, fill = yellow!40, inner sep = 1.2cm] () at (0,0) {};
            \makepoints
            \draw[RedLine, line width = 2] (p) -- (q) -- (r);
            \node () at ($(0,0)!1.4!(p)$) {$p$};
            \node () at ($(0,0)!1.4!(q)$) {$q$};
            \node () at ($(0,0)!1.4!(r)$) {$r$};
            \drawpoints
        \end{scope}
    
        \draw[->, line width = 1.2] (-2,0) -- (2,0);
    
        \begin{scope}[shift={(4,0)}]
            \node[disk, fill = yellow!40, inner sep = 1.2cm] () at (0,0) {};
            \makepoints
            \draw[RedLine, line width = 2] (p) -- (r) -- (q);
            \node () at ($(0,0)!1.4!(p)$) {$p$};
            \node () at ($(0,0)!1.4!(q)$) {$q$};
            \node () at ($(0,0)!1.4!(r)$) {$r$};
            \drawpoints
        \end{scope}    
    \end{tikzpicture}

%% file: figures/fig-thm-boolean.tex
\begin{tikzpicture}

    \begin{scope}[rotate=180]
        \node [disk,fill=yellow!40, inner sep = 3.5cm] () at (0,0) {};
        
        \foreach \i in {1,...,5} {
            \coordinate (a\i) at (72*\i+72+90:3.5);
        }
    
        \coordinate (b1) at ($(a4)!0.5!(a5)$);
        \coordinate (b2) at ($(a5)!0.33!(a1)$);
        \coordinate (b3) at ($(a5)!0.67!(a1)$);
        \coordinate (b4) at ($(a2)!0.25!(a3)$);
        \coordinate (b5) at ($(a2)!0.5!(a3)$);
        \coordinate (b6) at ($(a2)!0.75!(a3)$);
        \coordinate (b7) at ($(a3)!0.33!(a4)$);
        \coordinate (b8) at ($(a3)!0.67!(a4)$);
        \coordinate (b9) at ($(a1)!0.5!(a2)$);

        \begin{scope}[RedLine, line width = 0.7mm]
            \draw (a4) -- (b9);
            \draw (b8) -- (b7) -- (b5) -- (b4) -- (b9) -- (a1) -- (b3) -- (b2) -- (b1);
        \end{scope}

        \begin{scope}[RedLine, line width = 0.3mm, dashed]
            \draw (b1) -- (a5);
            \draw (a2) -- (b9);
            \draw (a3) -- (b6) -- (b7);
        \end{scope}
    
        \foreach \i in {1,...,5} {
            \draw (a\i) node [dot, inner sep = 0.7mm] {};
        }
    
        \foreach \i in {1,...,8,9} {
            \draw (b\i) node [dot, inner sep = 0.7mm] {};
        }

        \foreach \i in {1,...,5} {
            \draw ($(0,0)!1.2!(a\i)$) node () {\large $z_{\i,0}$};
        }

        \draw ($(0,0)!0.8!(b8)$) node () {\large $z_-$};
        \draw ($(0,0)!0.8!(b7)$) node () {\large $v_1$};
        \draw ($(0,0)!0.8!(b5)$) node () {\large $v_2$};
        \draw ($(0,0)!1.2!(b9)$) node () {\large $v_j$};
        \draw ($(0,0)!0.8!(b2)$) node () {\large $v_h$};
        \draw ($(0,0)!0.8!(b1)$) node () {\large $z_+$};
    \end{scope}
\end{tikzpicture}

%% file: figures/fig-boolean-nonexample.tex
    \tikzstyle{disk}=[thick,shape=circle,draw,color=black,fill=yellow!10]
    \tikzstyle{dot}=[shape=circle,draw,color=black,fill=black,inner sep=1.5pt]

    \begin{tikzpicture}
        \begin{scope}[shift={(0,0)}]
            \node[disk, fill = yellow!40, inner sep = 1.5cm] () at (0,0) {};
            \coordinate (a1) at (90:1.5);
            \coordinate (a2) at (210:1.5);
            \coordinate (a3) at (330:1.5);
            \coordinate (b1) at (90:0.75);
            \coordinate (b2) at (210:0.75);
            \coordinate (b3) at (330:0.75);

            \draw[BlueLine, line width = 0.5mm] (a1) -- (b2);
            \draw[BlueLine, line width = 0.5mm] (a2) -- (b3);
            \draw[BlueLine, line width = 0.5mm] (a3) -- (b1);
        
            \foreach \i in {a1,a2,a3,b1,b2,b3} {
                \draw (\i) node [dot] {};
            }
        \end{scope}    
    \end{tikzpicture}